\providecommand{\R}{}
\providecommand{\Z}{}
\providecommand{\N}{}
\renewcommand{\R}{\mathbb{R}}
\renewcommand{\Z}{\mathbb{Z}}
\renewcommand{\N}{{\mathbb N}}
\newcommand{\p}[1]{{\mathbf P}\left\{#1\right\}}
\newcommand{\I}[1]{{\mathbf 1}_{[#1]}}
 \newcommand{\bag}{\begin{align}}
\newcommand{\bags}{\begin{align*}}
\newcommand{\eag}{\end{align*}}
\newcommand{\eags}{\end{align*}}
\newtheorem{thm}{Theorem}
\newtheorem{prop}[thm]{Proposition}
\newtheorem{cor}[thm]{Corollary}
\newcommand\cB{\mathcal B}
\newcommand\cC{\mathcal C}
\newcommand\cM{\mathcal M}
\newcommand\cS{{\mathcal S}}
\newcommand\cT{{\mathcal T}}
\definecolor{clou}{rgb}{0.5,0.125,0.3125}
\newcommand{\eps}{\epsilon}
\newcommand\urladdrx[1]{{\urladdr{\def~{{\tiny$\sim$}}#1}}}
\xdef\oclock{\the\count1:0\the\count255}
\xdef\oclock{\the\count1:\the\count255}\fi
\newcommand{\rT}{\mathrm{T}}
\newcommand{\rM}{\mathrm{M}}
\newcommand{\rQ}{\mathrm{Q}}
\newcommand{\rG}{\mathrm{G}}
\newcommand{\rP}{\mathrm{P}}
\newcommand{\lrb}[1]{\llbracket #1 \rrbracket}
\newcommand{\ol}[1]{#1^{\to}}
\providecommand{\v}{}
\renewcommand{\v}{\mathrm{v}}
\newcommand{\parent}{\alpha}
\begin{document}

\title[Growing random maps]{Growing random 3-connected maps\\
{\footnotesize or { \em Comment s'enfuir de l'hexagone}}}
\author{L. Addario-Berry}
\address{Department of Mathematics and Statistics, McGill University, 805 Sherbrooke Street West, 
		Montr\'eal, Qu\'ebec, H3A 2K6, Canada}
\email{louigi@math.mcgill.ca}
\date{February 10, 2014} 
\urladdrx{http://www.math.mcgill.ca/~louigi/}

\keywords{Random maps, random trees, random planar graphs, growth procedures}
\subjclass[2010]{60C05,60J80,05C10} 

\begin{abstract} 
We use a growth procedure for binary trees \cite{luczak04building}, a bijection between binary trees and irreducible quadrangulations of the hexagon \cite{fusy08dissection}, and the classical angular mapping between quadrangulations and maps, to define a growth procedure for maps. The growth procedure is local, in that every map is obtained from its predecessor by an operation that only modifies vertices lying on a common face with some fixed vertex. As $n \to \infty$, the probability that the $n$'th map in the sequence is 3-connected tends to $2^8/3^6$. 
The sequence of maps has an almost sure limit $G_{\infty}$, and we show that $G_{\infty}$ is the {\em distributional} local limit of large, uniformly random 3-connected graphs. 
\end{abstract}
\maketitle


\section{\large {\bf Introduction}}\label{sec:intro} 
Here is a common situation in probability. We have found a result of the form "as $n \to \infty$, $X_n \to X_{\infty}$ in distribution", where $(X_n,1 \le n \le \infty)$ are some sort of random objects. The Skorohod embedding theorem then guarantees (in great generality) the existence of a coupling such that $X_n \to X_{\infty}$ almost surely. This result, though useful, is existential, and often the discovery of an {\em explicit} coupling leads to a deeper understanding of both the limit object and its finite approximations.  

Given the substantial recent interest in random planar maps having the Brownian map as their (known or conjectural) scaling limit, it seems natural to seek such a coupling for random maps. (We hereafter refer to such a coupling as a {\em growth procedure}.) To date, however, all convergence results for such random planar maps have been distributional in nature. The goal of this note is to provide an explicit, local -- in the sense described in the abstract -- growth procedure for random planar maps. The result is a sequence of rooted maps $(\mathrm{M}_n, 1 \le n \le \infty)$, such that $\mathrm{M}_n \stackrel{\mathrm{a.s.}}{\rightarrow} \mathrm{M}_\infty$ as $n \to \infty$, by which we mean that balls of any fixed radius around the root almost surely stabilize. 

We briefly summarize the structure and the arguments of the paper, then head right to details. We begin by considering {\em irreducible quadrangulations of the hexagon}: these are rooted maps with a single face of degree 6 and all other faces of degree 4, such that every cycle of length 4 bounds a face. (The root is an uniformly random oriented edge; it need not lie along the hexagonal face.) Such maps are in bijective correspondence with rooted binary plane trees \cite{fusy08dissection}; we describe the bijection in Section~\ref{sec:closurebij}. In Section~\ref{sec:budgrowth-mapgrowth} we describe how growing a binary tree -- transforming a degree-one vertex into a degree-three vertex -- changes the corresponding irreducible quadrangulation of the hexagon.

A growth procedure is already known to exist for random binary plane trees \cite{luczak04building}; the almost sure limit $\mathrm{T}_{\infty}$ is a critical binomial Galton-Watson tree, conditioned to survive. In Section~\ref{sec:growing} we use the bijection and properties of $\mathrm{T}_{\infty}$ to show that the corresponding growth procedure for irreducible quadrangulations of the hexagon has an almost sure limit. We then show, in Section~\ref{sec:3-connectedmaps}, that the structure of the irreducible quadrangulations near their root asymptotically decouples from the structure near the hexagonal face. We use this fact together with the angular mapping between quadrangulations and general maps to define a growth procedure $(G_n,1 \le n \le \infty)$ for random maps. We show that the almost sure limit $G_{\infty}$ is almost surely $3$-connected and is the distributional limit of large random $3$-connected maps. We conclude in Section~\ref{sec:conc} with a number of remarks and suggestions for future research. 

\section{\large {\bf Definitions}}
We briefly review some basic concepts regarding (planar) maps; more details can be found in, e.g., \cite{lando04graphs}. All our maps are connected. Also, maps are by default embedded in $\R^2$ rather than on the sphere $\mathbb{S}^2$. For any graph or map $M$, we write $v(M)$ and $e(M)$ for the nodes and edges of $M$, respectively. The {\em corners} incident to a face $f$ of $M$ are the angles $\kappa=(\{u,v\},\{v,w\})$ formed by consecutive edges along the face. We say $\kappa$ is incident to $f$, to $v$, and to its constituent edges, and write $\v(\kappa)=v$. The degree of $f$ is the number of corners incident to $f$. Write $\cC(M)$ for the set of corners of $M$. 

For $\{u,v\} \in e(M)$, we use both $(u,v)$ and $uv$ as notation for the orientation of $\{u,v\}$ with tail $u$ and head $v$. An oriented circuit $C$ in $M$ is {\em clockwise} if the bounded region of $\R^2\setminus C$ lies to the right of $C$, and is otherwise counterclockwise. A {\em rooted map} is a pair $\rM=(M,vw)$, where $\{v,w\} \in e(M)$; $vw$ is the {\em root edge}. The {\em root corner} $\rho=\rho(\rM)$ is the unique corner  incident to $v$ whose second incident edge is $\{v,w\}$, and the {\em root node} is $r(\rM)=u$. 

Given a rooted map $\rM=(M,uv)$ and $R > 0$, write $\rM^{< R}$ for the submap induced by the set of nodes at graph distance less than $R$ from $u$. We say a sequence $(\rM_n,n \ge 1)$ of (finite or infinite) locally finite rooted maps converges to rooted map $\rM_\infty$, and write $\rM_n \to \rM_{\infty}$, if for all $R > 0$, there is $n_0 \in \N$ such that for all $n \ge n_0$, $\rM_n^{< R}$ and $\rM_{\infty}^{< R}$ are isomorphic as rooted maps. Note that since the maps $\rM_n$ are locally finite, so is their limit $\rM_{\infty}$. This is often called {\em local weak convergence}; see \cite{benjamini01recurrence,aldous04objective,gg13recurrence} for more details. 

A {\em quadrangulation} is a map in which every face has degree four. In a {\em quadrangulation of the hexagon}, the unbounded face has degree six and all others have degree four. A quadrangulation, or a quadrangulation of the hexagon, is called {\em irreducible} if every cycle of length four bounds a face. It is easily seen that an irreducible quadrangulation of the hexagon is necessarily simple, and also $3$-connected, so has an unique embedding by Whitney's theorem. 

\begin{wrapfigure}[14]{R}{.3\textwidth}
\vspace{-0.3cm}
\rule[5pt]{.3\textwidth}{0.5pt}
                 \includegraphics[width=.3\textwidth,page=1]{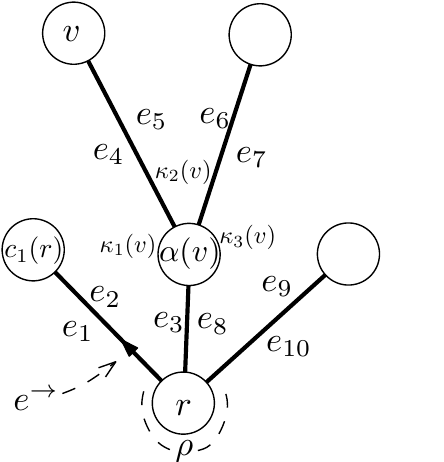}
\caption{\small A binary (plane) tree $(T,r)$. With labels as in the figure, $\kappa_3(v) \prec \rho \prec \kappa_1(v)$. }
\label{fig:bintree}
\rule[8pt]{.3\textwidth}{0.5pt} 
\end{wrapfigure}
The definitions of the coming paragraphs are illustrated in Figure~\ref{fig:bintree}. 
A {\em plane tree} is a rooted tree $\rT=(T,r(T))$ together with an ordering of the children of each node $v$ as $c_1(v),\ldots,c_{k_{\rT}(v)}$, where $k_{\rT}(v)$ is the number of children of $v$. This collection of orderings uniquely specifies $\rT$ as a planar map, with root $e^{\to} = e^{\to}(\rT)=(r(T),c_1(r(T)))$. 
Conversely, the orderings may be recovered from the embedding and the root edge $e^{\to}(\rT)$. 

Viewed as a map, $\rT$ has an unique (unbounded) face. Writing $n=|v(T)|$, list the (oriented) edges of this face in the order they are traversed by a counterclockwise tour\footnote{Recall that such a tour keeps the unbounded face to its {\em left}.} of $\rT$ starting from 
$e^{\to}(\rT)$, 
as $\ol{e}_1(\rT),\dots,\ol{e}_{2n-2}(\rT)$, or simply $\ol{e}_1,\ldots,\ol{e}_{2n-2}$ when the tree is clear from context. Write $e_1,\ldots,e_{2n-2}$ for the corresponding unoriented edges. Note that each edge of $\rT$ appears exactly twice in this sequence. It is also convenient to set $\ol{e}_{2n-1}=\ol{e}_1$. We then have $\rho(\rT)=\{e_{2n-2},e_1\}$. 

For $v \in v(\rT)$, list the corners incident to $v$ as $\kappa_1(v),\ldots,\kappa_{d}(v)$, in the order they appear in the counterclockwise tour. Also, write $\prec$ for the cyclic order on corners induced by the counterclockwise tour. In other words, $\kappa \prec \kappa' \prec \kappa^*$ iff $\kappa' \not \in \{\kappa,\kappa^*\}$ and the cyclic tour starting from $\kappa$ visits $\kappa'$ before $\kappa^*$; in this case we say $\kappa^*$ is between $\kappa$ and $\kappa'$. Finally, for $v,w \in v(\rT)$, write $\lrb{v,w}$ for the unique simple path from $v$ to $w$ in $\rT$.

In this paper, a {\em binary tree} is a plane tree $\rT=(T,r(T))$ all of whose nodes have degree either one or three.
We call the degree one and three nodes of $\rT$ the {\em buds} and {\em internal nodes} of $\rT$, and denote them $B(\rT)$ and $I(\rT)$, respectively. Likewise, {\em bud corners} and {\em internal corners} have their obvious meanings, and we write $\cC_B(\rT)$ and $\cC_I(\rT)$ for the sets of bud and internal corners, respectively. We always have $|\cC_I(\rT)|/3=|I(\rT)|=|B(\rT)|-2=|\cC_B(\rT)|-2$. For a bud $v$, write $\parent(v)$ for the unique node of $\rT$ adjacent to $v$ ($\parent(v)$ is the parent of $v$ unless $v=r(T)$). 

\section{\large {\bf Bijections and growth procedures for trees and maps}}
Let $\rT=(T,r(T))$ be a binary tree, and write $e^{\to}=e^{\to}(\rT)=(r(T),c_1(r(T)))$ as above. In the first subsection, we describe a labelling of the corners of $\rT$, which we then use to define an irreducible quadrangulation of the hexagonal. The quadrangulation has a subtree of $\rT$ as a canonical ``nearly-spanning'' tree. This construction, due to Fusy, Poulalhon, and Schaeffer \cite{fusy08dissection}, is invertible and so bijective. In the second subsection we analyze the effect of growing a binary tree on the quadrangulation associated to it by the bijection we now describe.

\subsection{The Fusy-Poulalhon-Schaeffer ``closure'' bijection}\label{sec:closurebij}

Given a corner $\kappa \in \cC(T)$, for $i \in \{1,2,3\}$ let $N_i(\kappa)$ be the number of $i$'th children in 
$\lrb{r(T),\v(\kappa)}$. Note that all nodes except $r(T)$ have either zero or two children, and $r(T)$ has either one or three children.

If $\kappa \in \cC(\rT)$ is incident to node $v$ and is the $i$'th such corner, $\kappa_i(v)=\kappa$, then set $\eps_{\kappa}=2(i-1)$, so $\eps_{\kappa} \in \{0,2,4\}$. Then set 
\begin{equation}\label{eq:labeldef}
S_{\rT}(\kappa) = \begin{cases}
		3N_3(\kappa)+N_2(\kappa)-N_1(\kappa)+\eps_{\kappa} & \mbox{if}~\kappa \ne \rho \\
		 -4\I{\rho~\mathrm{is~a~bud~corner}} & \mbox{if}~\kappa=\rho
		 \end{cases}
\end{equation}

 \begin{wrapfigure}[16]{R}{.3\textwidth}
\rule[5pt]{.3\textwidth}{0.5pt}
                 \includegraphics[width=0.3\textwidth,page=1]{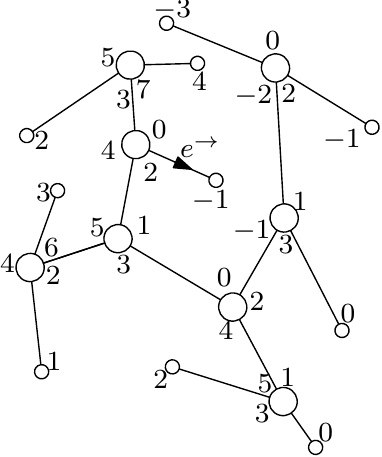}
\caption{\small A binary tree endowed with the corner labelling $S_{\rT}$.}
\label{fig:labeling}
\rule[8pt]{.3\textwidth}{0.5pt} 
\end{wrapfigure}
 An example appears in Figure~\ref{fig:labeling}. 
Here is another description of $S_{\rT}$, which is easily seen to be equivalent. 
 Give $\rho$ label $-4\I{\rho~\mathrm{is~a~bud~corner}}$, then perform a cyclic tour of the tree starting from $\rho$. When moving from an inner corner to another corner, decrease the label by one. When moving from a bud corner to another (necessarily inner) corner, increase the label by three. Finally, when returning to the root corner, subtract an additional six. 

Given a bud corner $\kappa$, if there exists an internal corner $\kappa'$ such that $S_{\rT}(\kappa') \le S_{\rT}(\kappa) - 6\I{\kappa \prec \rho \preceq \kappa'}$ then let $\sigma(\kappa)=\sigma_{\rT}(\kappa)$ be the first such corner $\kappa'$ (i.e., if $\kappa^*$ is another such internal corner then $\kappa \prec \kappa' \prec \kappa^*$). Call $\sigma(\kappa)$ the {\em attachment} corner of $\kappa$. Necessarily $S_{\rT}(\sigma(\kappa))=S_{\rT}(\kappa) - 6\I{\kappa \prec \rho \preceq \sigma(\kappa)}$ since labels decrease by at most one along edges (except at $\rho$, but this is accounted for by the correction term for winding around $\rho$). 

Let $\Delta=\Delta(\rT)$ be the set consisting of those corners $\kappa \in \cC_B(T)$ for which there is no $\kappa'$ with $S_{\rT}(\kappa') \le S_{\rT}(\kappa) - 6\I{\kappa \prec \rho \preceq \kappa'}$. 
Write $s^*=s^*(\rT) = \min\{S_{\rT}(\kappa): \kappa \in \cC(T)\}$. Then for each $\kappa \in \Delta$, $S_{\rT}(\kappa) \in [s^*,s^*+6]$, and we set $\sigma(\kappa)=s(\kappa)-s^*-6\I{s(\kappa)-s^*= 6}$. 

We now form a map from $\rT$ as follows. 
By {\em closing} a bud corner $\kappa \not \in \Delta$, we mean identifying $\v(\kappa)$ and $\v(\sigma(\kappa))$ to form an oriented edge $e_{\kappa}=(\parent(\v(\kappa)),\v(\sigma(\kappa)))$, in such a way that the oriented cycle $C_{\kappa}$ formed by following $e_{\kappa}$, then returning to $\parent(\v(\kappa))$ via $\lrb{\v(\sigma(\kappa)),\parent(\v(\kappa))}$, is clockwise (the bounded region of $\R^2 \setminus C$ lies to its right). 

Draw a hexagon so that $\rT$ lies in its bounded face (interior), and label its interior corners $0,1,\ldots,5$ so that for $\kappa \in \Delta$, $\sigma(\kappa)$ is a corner of the hexagon. (Later, we will also view the nodes of the hexagon as having labels $0,1,2,3,4,5$, in the obvious way.) For each bud corner $\kappa \not \in \Delta$, close $\kappa$ (see Figure~\ref{fig:closure-inner}). For each $\kappa \in \Delta$, identify $\v(\kappa)$ with the hexagon corner labelled $\sigma(\kappa)$ (see Figure~\ref{fig:closure-hex})

Writing $e^{\to}$ for the image of the root edge $e^{\to}(\rT)$ in $M$, 
the resulting map $\rM=(M,e^{\to})$ is an irreducible, edge-rooted quadrangulation of a hexagon. The key point of this section (Proposition~\ref{prop:bijection}, below) is that the function taking $\rT$ to $\rM$ is bijective \cite{fusy08dissection}. We use this fact more-or-less as a black box; however, the next two paragraphs contain a very brief sketch of one aspect of its proof, for the interested reader. A more detailed, very readable explanation can be found in \cite[Section 4]{fusy08dissection}.

\begin{figure}[ht]
\hspace{-0.5cm}
\begin{subfigure}[b]{0.3\textwidth}
		\includegraphics[width=\textwidth,page=2]{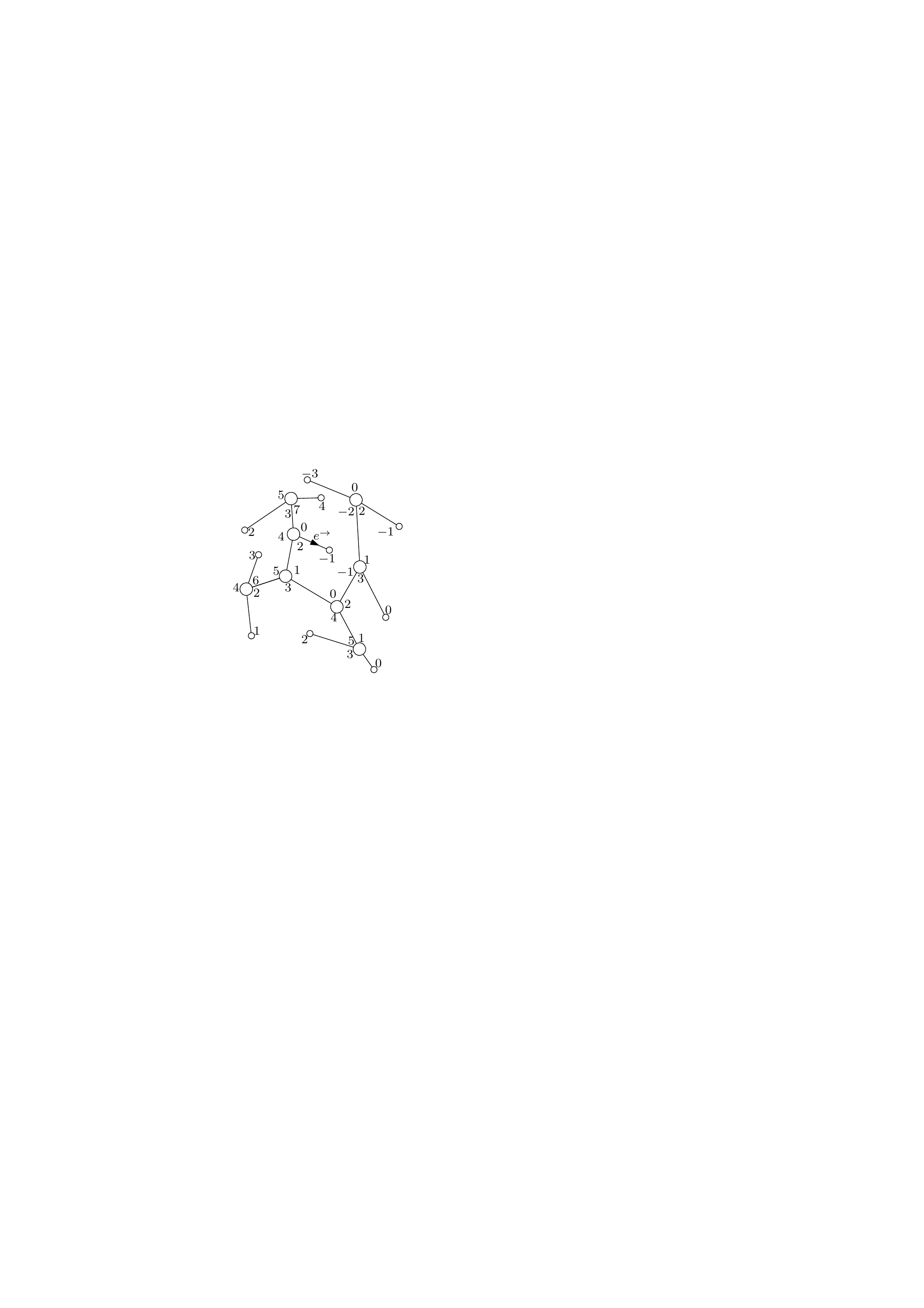}
                \caption{Blue edges denote closures to internal corners.\\}
                \label{fig:closure-inner}
\end{subfigure}%
\quad
\begin{subfigure}[b]{0.3\textwidth}
		\includegraphics[width=\textwidth,page=3]{figures/treetohex.pdf}
                \caption{Red edges denote closures of corners in $\Delta$.\\}
                \label{fig:closure-hex}
\end{subfigure}%
\quad
\begin{subfigure}[b]{0.3\textwidth}
		\includegraphics[width=\textwidth,page=4]{figures/treetohex.pdf}
                \caption{The induced orientation (edges without arrows are doubly oriented).}
                \label{fig:closure-orient}
\end{subfigure}%
\quad
\label{fig:closure}
\end{figure}

The map $\rM$ inherits the orientations $\{e_{\kappa}, \kappa \in \cB_B(\rT)\}$, which orients a subset of its edges (the orientation of $e^{\to}$ need not respect the inherited orientation). The remaining edges of $\rM$ are the edges of the hexagon, together with those edges of $\rT$ joining internal nodes of $\rT$, and we view all these edges as doubly oriented, or oriented in both directions (see Figure~\ref{fig:closure-orient}; in that figure the edge $e^{\to}$ is indicated by a solid black arrow, whereas the inherited orientation is shown with empty white arrows). We say a doubly oriented edge is an outgoing edge from both its endpoints. Since $\rT$ is binary, it follows that each non-hexagon node $v$ of $\rM$ has exactly three outgoing edges; such an orientation is called a {\em tri-orientation} of $\rM$. More precisely, a tri-orientation of $\rM$ is an orientation of the edges of $\rM$ (with both singly and doubly oriented edges permitted) such that every non-hexagon node has exactly three outgoing edges, and hexagon nodes have exactly two outgoing edges. 

Finally, the ``clockwise'' orientation of the closure operation straightforwardly implies that $\rM$ has no counterclockwise cycle in its interior, in the sense that any oriented cycle $C$ in $\rM$ containing at least one non-hexagon node and with its unbounded face to its right, must traverse some (singly) oriented edge of $\rM$ from head to tail. It turns out that for any irreducible quadrangulation of the hexagon there is an {\em unique} tri-orientation of $\rM$ with no counterclockwise cycle in its interior \cite[Theorem 4.4]{fusy08dissection}. Furthermore, the doubly-oriented edges of this tri-orientation form a hexagon plus a spanning tree $\rT$ of the internal vertices of $\rM$, and $\rT$ is a binary tree whose closure is $\rM$. Likewise, applying the closure operation to any tree $\rT$ yields a map $\rM$ whose ``opening'' is again $\rT$. It follows that the closure operation is a bijection. 

Let $\cT_n$ be the set of binary trees with $n$ internal nodes. 
Also, let $\cM$ be the set of pairs $(M,e^{\to})$, where $M$ is an irreducible quadrangulation of the hexagon and $e^{\to}$ is an oriented edge of $M$ at least one of whose endpoints is a non-hexagon node, and let $\cM_n=\{(M,e^{\to}) \in \cM: |v(M)|=n+6\}$. 
\begin{prop}[\cite{fusy08dissection}, Theorems 4.7 and 4.8]\label{prop:bijection}
For each $n \ge 1$, the closure operation is a bijection between $\cT_n$ and $\cM_n$. 
\end{prop}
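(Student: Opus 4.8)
The plan is to exhibit an explicit two-sided inverse of the closure map --- the ``opening'' operation sketched above --- and to verify along the way that closure maps $\cT_n$ into $\cM_n$ and that opening maps $\cM_n$ into $\cT_n$. The one substantial external ingredient is the existence and uniqueness of a tri-orientation of an irreducible quadrangulation of the hexagon having no counterclockwise cycle in its interior, together with the fact that the doubly-oriented edges of this canonical tri-orientation form the hexagon plus a spanning binary tree of the internal vertices; I would take this as a black box, since it is \cite[Theorem~4.4]{fusy08dissection} and is sketched above. If one wished to avoid citing it, uniqueness follows from the usual distributive-lattice structure on $\alpha$-orientations (the counterclockwise-free one being its unique minimal element under a suitable sign convention), and feasibility of the relevant out-degree sequence is a consequence of Euler's formula, which for such maps gives $e(M)=2v(M)-5$ and hence forces exactly $v(M)-1$ doubly-oriented edges --- matching ``hexagon plus spanning tree of the internal vertices''.

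First I would show closure maps $\cT_n$ into $\cM_n$. For $\rT \in \cT_n$, an induction along the counterclockwise contour establishes that the label function $S_{\rT}$ behaves as advertised --- it drops by one on each step between internal corners, rises by three on leaving a bud, with the stated $-6$ correction at $\rho$ --- so that each attachment corner $\sigma(\kappa)$ and the exceptional set $\Delta(\rT)$ are well defined and each closure of a bud corner $\kappa \notin \Delta$ produces a clockwise cycle $C_\kappa$ whose interior is disjoint from the other closure edges. Every bounded face of the resulting map $\rM$ is then either a region of the tree's outer face cut off by two closures or a region bounded by two tree edges and one closure edge; reading labels around such a region shows it has degree $4$, while the outer face keeps degree $6$, so $\rM$ is a quadrangulation of the hexagon. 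Irreducibility is forced by the clockwise convention: a $4$-cycle that did not bound a face would have to enclose an internal vertex, and reading labels around it would exhibit a counterclockwise cycle through that vertex in the inherited tri-orientation, which is forbidden. Finally, closure identifies each of the $n+2$ buds of $\rT$ with an internal or hexagon vertex, so $|v(M)|=n+6$, and since at least one of $r(T)$, $c_1(r(T))$ is an internal node the image root edge has a non-hexagon endpoint; hence $\rM \in \cM_n$.

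Next I would show opening inverts closure. Given $(M,e^{\to}) \in \cM_n$, pass to its canonical counterclockwise-free tri-orientation; by the cited result its doubly-oriented edges form the hexagon together with a spanning binary tree $\rT$ of the internal vertices, which has $n$ internal nodes. Rooting $\rT$ at the appropriate endpoint of $e^{\to}$, the key verification is that the labels $S_{\rT}$ furnish a discrete potential adapted to this tri-orientation (singly-oriented edges strictly decreasing, doubly-oriented edges level, up to the $\rho$- and winding corrections), from which the ``first admissible matching label'' rule used in the closure construction reconstructs every closure edge of $\rM$ with its orientation. Hence the closure of $\rT$ is again $(M,e^{\to})$, so opening is a well-defined map $\cM_n \to \cT_n$ and is a right inverse of closure. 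It is also a left inverse: the tri-orientation inherited from a closure has, by the clockwise convention, no counterclockwise interior cycle, hence coincides with the canonical one, so opening the closure of $\rT$ returns $\rT$. This proves the proposition.

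The step I expect to be the main obstacle is everything surrounding the canonical tri-orientation: its uniqueness, and the upgrade from ``its doubly-oriented edges form some subgraph'' to ``they form the hexagon plus a spanning binary tree''. Uniqueness is the standard $\alpha$-orientation argument --- the reorientation relating two candidates is an Eulerian subdigraph, it decomposes into directed cycles, and an innermost such cycle is counterclockwise in one of the two orientations, a contradiction --- but one must be careful about what ``interior'' means given that the outer face has degree $6$ and that hexagon vertices carry out-degree $2$ rather than $3$. The remaining tasks --- the label bookkeeping, the face-degree and vertex counts, irreducibility, and the potential identity --- are routine inductions along the contour of $\rT$, where I would expect no surprises.
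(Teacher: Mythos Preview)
The paper does not supply its own proof of this proposition; it is quoted as Theorems~4.7 and~4.8 of \cite{fusy08dissection} and used as a black box, with only the brief sketch in the two paragraphs preceding the statement (closure induces a counterclockwise-free tri-orientation; the unique such tri-orientation, via \cite[Theorem~4.4]{fusy08dissection}, gives the inverse ``opening''). Your proposal is a faithful expansion of exactly that sketch, relying on the same black-box ingredient, so your approach coincides with what the paper outlines.
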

In the next subsection, we explain the effect of ``growing'' $\rT$ -- transforming a bud into an internal node -- on the map $\rM$ resulting from the closure operation. 

\subsection{Bud growth and map growth}\label{sec:budgrowth-mapgrowth}
Given a binary tree $\rT=(T,r(T))$ and a bud corner $\kappa$ of $\rT$, {\em growing} $\rT$ at $\kappa$ means adjoining two buds incident to $v=\v(\kappa)$, so $\kappa$ becomes an internal node of degree three. Write $\rT^+$ for the resulting tree, which is still binary. Let $\rM$ and $\rM^+$ be the closures of $\rT$ and $\rT^+$, respectively. Figures~\ref{fig:growth} and~\ref{fig:growthplus} depict the corresponding difference between $\rM$ and $\rM^+$; this difference is {\em local}, in that it is confined to faces incident to $\v(\sigma_{\rT}(\kappa))$. We now explain the transformation in detail. 

\begin{figure}[ht]
\begin{subfigure}[b]{0.5\textwidth}
\begin{centering}
		\includegraphics[width=0.8\textwidth,page=1]{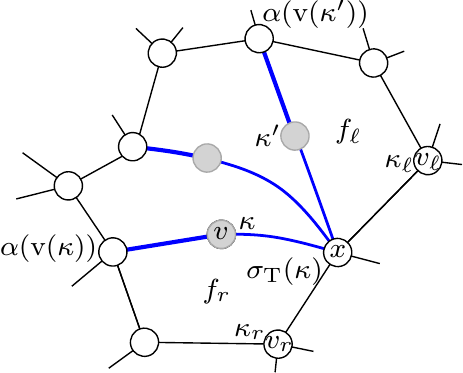}
                \caption{A part of $\rM$.}
                \label{fig:growth}
\end{centering}
\end{subfigure}%
\begin{subfigure}[b]{0.5\textwidth}
\begin{centering}
		\includegraphics[width=0.8\textwidth,page=2]{figures/hexgrow.pdf}
                \caption{The same part of $\rM^+$.}
                \label{fig:growthplus}
\end{centering}
\end{subfigure}%
\caption{Gray nodes are nodes of $\rT$ but not of $\rM$ (left), or of $\rT^+$ but not of $\rM^+$ (right). Blue edges have a thick part and a thin part, divided by a gray node; the thick part of each blue edge is itself an edge of $\rT$ (left) or $\rT^+$ (right).}
\label{fig:hex}
\end{figure}

Let $\kappa'$ be the first bud corner with $\sigma_{\rT}(\kappa')=\sigma_{\rT}(\kappa)$, in the sense that if $\kappa^*$ is any other bud corner with $\sigma_{\rT}(\kappa^*)=\sigma_{\rT}(\kappa)$ then $\kappa' \prec \kappa^* \prec \kappa$. Then let $K(\kappa) = \{\kappa^* \in \cC_B(\rT): \sigma_{\rT}(\kappa') = \sigma_{\rT}(\kappa)\}$. In Figure~\ref{fig:growth}, the nodes incident to corners in $K(\kappa)$ are precisely the greyed nodes

Write $x = \v(\sigma_{\rT}(\kappa))$, let $e_{\ell}=\{\parent(\v(\kappa')),x\}$ and let $e_r = \{\parent(\v(\kappa)),x\}$, and let $f_{\ell}$ and $f_r$ be the faces of $\rM$ lying to the left and right of $e_{\ell}$ and $e_r$, respectively. Then let $v_\ell$ and $v_r$ be the vertices of $\rM$ diagonally opposite $\parent(\v(\kappa'))$ and $\parent(\v(\kappa))$ on $f_{\ell}$ and on $f_r$, respectively, and let $\kappa_{\ell}$ and $\kappa_r$ be the corners of $f_\ell$ and $f_r$ incident to $v_\ell$ and $v_r$. Again, see Figure~\ref{fig:growth}.  

With these definitions, $\rM^{+}$ is formed from $\rM$ as follows. For all $\xi \in K(\kappa)$ remove $e_{\xi}$ from $\rM$. This creates a face of degree $2|K(\kappa)|+4$. Add a vertex $v$ in this face (recall that we also wrote $v=\v(\kappa)$; this is deliberate), and add edges from $v$ to $\parent(\v(\xi))$ for each $\xi \in K(\kappa)$, and from $v$ to $\v(\kappa_{\ell})$ and $\v(\kappa_r)$. 

To prove that this description is valid, argue as follows.\footnote{If Figure~\ref{fig:hex} is sufficiently convincing, feel free to skip straight to Section~\ref{sec:growing}.} Viewing $\rT$ as a subtree of $\rT^+$ in the natural way, for $\kappa' \in \cC(T)\setminus \{\kappa\}$ we have $S_{\rT}(\kappa')=S_{\rT^+}(\kappa')$. It follows that if $\kappa' \in \cC(T) \setminus K(\kappa)$ then $\sigma_{\rT}(\kappa') = \sigma_{\rT^+}(\kappa')$. In $\rT^+$ the corner $\kappa_1(v)$ is an internal corner, and has $S_{\rT^+}(\kappa_1(v))=S_{\rT}(\kappa)$. Furthermore, for all $\kappa'\in K(\kappa)\setminus \{\kappa\}$, in $\rT^+$ we have $\kappa' \prec \kappa_1(v) \prec \sigma_{\rT}(\kappa')=\sigma_{\rT}(\kappa)$ and thus $\sigma_{\rT^+}(\kappa')=\kappa_1(v)$. 

Next write $\xi_{\ell}$ and $\xi_r$ for the bud corners incident to the left and right children of $v$ in $\rT^+$ (these are the two greyed nodes incident to $v$ in Figure~\ref{fig:growthplus}). Then $S_{\rT^+}(\xi_{\ell})=S_{\rT}(\kappa)-1$ and $S_{\rT^+}(\xi_r)=S_{\rT}(\kappa)+1$. 
We claim that $\sigma_{\rT^+}(\xi_\ell)=\kappa_\ell$ and $\sigma_{\rT^+}(\xi_r)=\kappa_r$; proving this will establish the validity of the above description. 

In proving the above claim, it is useful to extend the domain of definition of $\sigma_{\rT}$ from $\cC_B(\rT)$ to $\cC(\rT)$ as follows. For $\kappa \in \cC_I(\rT)$, if the first corner $\kappa'$ after $\kappa$ in the counterclockwise tour around $\rT$ is internal set $\sigma_{\rT}(\kappa)=\kappa'$, and otherwise set $\sigma_{\rT}(\kappa)=\sigma_{\rT}(\kappa')$. 
We likewise define $\sigma_{\rT^+}(\kappa)$ for $\kappa \in \cC_I(\rT^+)$. 
With the above definition, for all $\kappa \in \cC_I(T)$, $S_{\rT}(\sigma_{\rT}(\kappa))=S_{\rT}(\kappa)-6\I{\kappa \prec \rho \prec \sigma_{\rT}(\kappa)}-1$, and all corners $\kappa'$ with $\kappa \prec \kappa' \prec \sigma_{\rT}(\kappa)$ have $S_{\rT}(\kappa') \ge S_{\rT}(\kappa)-6 \I{\kappa \prec \rho \preceq \kappa'}$. The analogous assertion holds for $\rT^+$. 

First consider $\xi_\ell$; we must show that $\kappa_{\ell}$ is the first corner $\kappa'$ after $\xi_{\ell}$ in $\rT^+$ with $S_{\rT^+}(\kappa') = S_{\rT^+}(\xi_\ell)-6\I{\xi_\ell \prec \rho \preceq \kappa'}= S_{\rT}(\kappa)-1-6\I{\xi_\ell \prec \rho \preceq \kappa'}$. All corners $\kappa' \in \cC(\rT^+)$ with $\xi_{\ell} \prec \kappa' \preceq \sigma_{\rT}(\kappa)$ have $S_{\rT^+}(\kappa') \ge S_{\rT}(\kappa)-6\I{\kappa \prec \rho \preceq \kappa'}$; this follows from the definition of $S_{\rT^+}$ for the corners of $\cC(\rT^+)\setminus \cC(\rT)$, and for the remaining corners follows from the definition of $\sigma_{\rT}(\kappa)$. Also, $\sigma_{\rT}(\kappa)$ is internal and $\kappa_\ell = \sigma_{\rT}(\sigma_{\rT}(\kappa))$, so all corners $\kappa'$ with $\sigma_{\rT}(\kappa) \prec \kappa' \prec \kappa_{\ell}$ have $S_{\rT^+}(\kappa') \ge S_{\rT^+}(\sigma_{\rT}(\kappa)) - 6\I{\sigma_{\rT}(\kappa)\prec \rho \preceq \kappa'}$. 
Since $\rho$ can not lie both between $\kappa$ and $\sigma_{\rT}(\kappa)$  {\em and} between $\sigma_{\rT}(\kappa)$ and $\kappa_{\ell}$, the result follows. 

The argument for $\xi_r$ is similar so we only sketch it. Let $\xi^*$ be the corner of $f_r$ incident to $\parent(\kappa)$, and note that $\xi^*$ is a corner of both $\rT$ and $\rT^+$. Thus $S_{\rT^+}(\xi^*)=S_{\rT}(\xi^*)=S_{\rT}(\kappa)+3$. Since $\kappa_r = \sigma_{\rT}(\sigma_{\rT}(\xi^*))$, by twice applying the definition of the attachment corner, $\kappa_r$ must be the first corner $\kappa'$ after $\xi_r$ in $T^+$ with $S_{\rT^+}(\kappa') \le S_{\rT}(\kappa)+1-6\I{\xi_r \prec \rho \preceq \kappa'}=S_{\rT^+}(\xi_r)-6\I{\xi_r \prec \rho \preceq \kappa'}$. 

\section{\large {\bf Growing uniformly random trees and, thus, maps}}\label{sec:growing}
In the infinite rooted binary tree $\mathbb{T}$ every node has precisely two children -- one left and one right child -- so all nodes have degree three but the root, which has degree two. Depth-$n$ nodes in $\mathbb{T}$ may be represented as strings in $\{-1,1\}^n$, with $-1$ and $1$ representing left and right. We adopt this point of view, and identify $\mathbb{T}$ with its node set $\bigcup_{i \ge 0} \{-1,1\}^{i}$, where $\{-1,1\}^0 = \{\emptyset\}$ and $\emptyset$ is the root of $\mathbb{T}$.

It is temporarily useful to view binary trees as subtrees of $\mathbb{T}$ as follows. For a given binary tree $\rT$, add a vertex $z$ in the middle of edge $e^{\to}(\rT)$. Identify $z$ with $r(\mathbb{T})$, and the head and tail of $e^{\to}$ with the left and right children of $r(\mathbb{T})$, respectively.  Recursively embed the remaining nodes by making first and second children in $\rT$ respectively corresond to left and right children in $\mathbb{T}$. See Figure~\ref{fig:treeinatree} for an illustration. The plane tree $\rT$ can be recovered from its representation as a subtree of $\mathbb{T}$ -- essentially by replacing the path from $1$ to $0$ through $\emptyset$ by a single edge -- so this viewpoint is reasonable. With this perspective we have $e^{\to}(\rT)=(1,-1)$ and $r(\rT)=1$; as usual $e^{\to}(\rT)$ is the second edge incident to the root corner $\rho$. 

The following result of Luczak and Winkler \cite{luczak04building} is key tool in the current work. 
\begin{thm}[\cite{luczak04building}, Theorem 4.1]
There exists a sequence $(\rT_n,n \ge 1)$ of random binary trees with the following properties. 
\begin{enumerate}
\item For each $n \ge 1$, $\rT_n$ is uniformly distributed in $\cT_n$.
\item For each $n \ge 1$, there is a bud corner $\xi_n$ of $T_n$ such that $T_{n+1}$ is obtained from $T_n$ by growing at $\xi_n$. In particular, the sequence is increasing so has a limit $T_{\infty} \subset \mathbb{T}$.  
\item The limit $\rT_{\infty}$ is a critical binomial Galton-Watson tree, conditioned to be infinite. 
\end{enumerate}
\end{thm}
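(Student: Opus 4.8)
Two things must be established: (i) the existence of a Markovian coupling of the uniform laws $\mu_n$ on $\cT_n$ under which $\rT_{n+1}$ is obtained from $\rT_n$ by a single growth move, and (ii) the identification of the resulting a.s.\ limit. The plan is to obtain (i) from a transportation (max-flow) argument on a bipartite ``growth graph'', to get the a.s.\ limit for free by monotonicity, and to pin down its law in (ii) by comparison with the known distributional local limit of uniform binary trees.

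For (i), fix $n$ and let $H_n$ be the bipartite graph with parts $\cT_n$ and $\cT_{n+1}$, joining $S$ to $T$ whenever $T$ is obtained from $S$ by growing some bud corner. Each $S\in\cT_n$ has exactly $n+2$ neighbours in $H_n$, one per bud corner (distinct, since plane trees have no nontrivial automorphisms); dually, the neighbours of $T\in\cT_{n+1}$ are the trees obtained by reversing a growth move at some internal node of $T$, say $c(T)$ of them, and $\sum_{T\in\cT_{n+1}}c(T)=(n+2)\,|\cT_n|$ by counting edges of $H_n$ in two ways. We want a probability measure $\nu_n$ on the edge set of $H_n$ with marginal $\mu_n$ on $\cT_n$ and marginal $\mu_{n+1}$ on $\cT_{n+1}$: its conditional laws $\nu_n(S,\cdot)/\mu_n(S)$ then form a transition kernel $K_n$, supported on growth moves, with $\mu_nK_n=\mu_{n+1}$, and chaining the $K_n$ starting from $\rT_1\sim\mu_1$ realises the whole sequence on one probability space with properties (1) and (2). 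By the max-flow--min-cut theorem (equivalently, a weighted form of Hall's marriage theorem), such a $\nu_n$ exists if and only if
\begin{equation*}
\frac{|N(B)|}{|\cT_n|}\ \ge\ \frac{|B|}{|\cT_{n+1}|}\qquad\text{for every }B\subseteq\cT_{n+1},
\end{equation*}
where $N(B)\subseteq\cT_n$ is the set of trees having a neighbour in $B$; this is equivalent to the symmetric condition over subsets of $\cT_n$.

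Granting (i), monotonicity does the rest. Viewing the $\rT_n$ as nested subtrees of $\mathbb{T}$, the union $\rT_\infty:=\bigcup_n\rT_n$ is a well-defined subtree of $\mathbb{T}$, hence automatically locally finite, and it is infinite since $|I(\rT_n)|=n$ for every $n$; therefore for each $R$ the ball $\rT_\infty^{<R}$ is finite and equals $\rT_n^{<R}$ for all large $n$, i.e.\ $\rT_n\to\rT_\infty$ a.s. For (ii), a.s.\ convergence yields convergence in distribution in the local topology; on the other hand a uniformly random $\rT_n$ converges in distribution, as $n\to\infty$, to the critical binomial Galton--Watson tree conditioned to survive, by the classical local limit theorem for critical Galton--Watson trees (Kesten) applied to the tree of internal nodes of $\rT_n$ (which under $\mu_n$ is a critical binomial Galton--Watson tree conditioned on its size, up to an $O(1)$ modification at the root). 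Uniqueness of the distributional limit then forces $\rT_\infty$ to have exactly this law.

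The crux, and the step I expect to be the main obstacle, is the expansion inequality displayed above. The difficulty is that $H_n$ is regular of degree $n+2$ on the $\cT_n$ side but very irregular on the $\cT_{n+1}$ side ($c(T)$ ranges from $1$, attained by comb-like trees, to order $n$), while $|\cT_{n+1}|/|\cT_n|$ stays bounded; so the trivial estimate ``each element of $N(B)$ is hit by at most $n+2$ members of $B$'' is hopelessly lossy, and one genuinely has to control $N(B)$ for an arbitrary family $B$ of large binary trees. I would attack it by assigning to each $T\in B$ a \emph{canonical} internal node at which to reverse a growth --- for instance the one incident to the first internal corner, in the cyclic order $\prec$, among the eligible ones --- letting $\phi(T)\in\cT_n$ be the resulting contraction, and then bounding, for a fixed $S\in\cT_n$, the number of $T\in\cT_{n+1}$ with $\phi(T)=S$: such a $T$ is $S$ grown at one of its $n+2$ buds, but the canonicity requirement sharply limits which bud, and summing the bound over $S$ should yield the inequality. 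An alternative is to prove concentration of $c(T)$ under $\mu_{n+1}$ by singularity analysis of the bivariate generating function enumerating binary trees by size and by number of cherries, and to dispose of the $o(|\cT_{n+1}|)$ atypical trees by an ad hoc argument. Either way, making this airtight is the technical heart of the matter --- and of the cited work of Luczak and Winkler.
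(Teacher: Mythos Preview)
The paper does not prove this theorem: it is quoted verbatim as a result of Luczak and Winkler and used as a black box, with only an explanatory paragraph spelling out the spine-plus-subcritical-bushes description of $\rT_\infty$. So there is no ``paper's own proof'' to compare against; your proposal is in effect an attempt to reconstruct the argument of the cited reference.

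Your outline is faithful to what Luczak and Winkler actually do for (i): they set up exactly this bipartite growth graph and reduce the existence of the coupling to a Hall-type expansion inequality, then prove that inequality. You are right that this is the technical heart, and right that the naive degree bound is useless because the $\cT_{n+1}$ side is highly irregular (the relevant degree is the cherry count, not the internal-node count). Your two suggested attacks --- a canonical-contraction injection and a cherry-concentration estimate --- are both plausible, but neither is carried out, and Luczak--Winkler's actual verification of the Hall condition is more delicate than either sketch suggests; in particular the ``canonical cherry'' idea as stated does not immediately give a good enough bound on fibre sizes, so this remains a genuine gap rather than a routine omission.

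For (ii) your argument is essentially correct but could be tightened. Local finiteness of $\rT_\infty$ is indeed automatic since $\mathbb{T}$ has bounded degree. The identification of the law via Kesten's theorem is the standard route; the ``$O(1)$ modification at the root'' you allude to is that, under the paper's embedding in $\mathbb{T}$, the root $\emptyset$ has only two children while all other internal nodes have two, so the tree of internal nodes is a size-conditioned critical $\mathrm{Bin}(2,1/2)$ Galton--Watson tree with one distinguished degree-two vertex inserted at the top --- harmless for the local limit, but worth saying precisely.
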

Here is a more detailed explanation of property (3). Let $(X_n,n \ge 1)$ be iid with $\p{X_n=1}=1/2=\p{X_n=-1}$, and for each $n$ let $Y_n=-X_n$. Let $P$ be the infinite path $\{(X_1,\ldots,X_i), i \ge 0\}$ in $\mathbb{T}$. Let $(B_n,n \ge 1)$ be independent Galton-Watson trees with offspring distribution $\mu$, where $\mu(\{0\})=1/2=\mu(\{2\})$, and for each $i \ge 1$, append $B_i$ to $P$ by rooting $B_i$ at node $(X_1,\ldots,X_{i-1},Y_i)$. The resulting tree (see Figure~\ref{fig:binaryiic} for an illustration) has the law of $\rT_{\infty}$. 
\begin{figure}[ht]
\begin{subfigure}[t]{0.33\textwidth}
		\includegraphics[width=\textwidth,page=1]{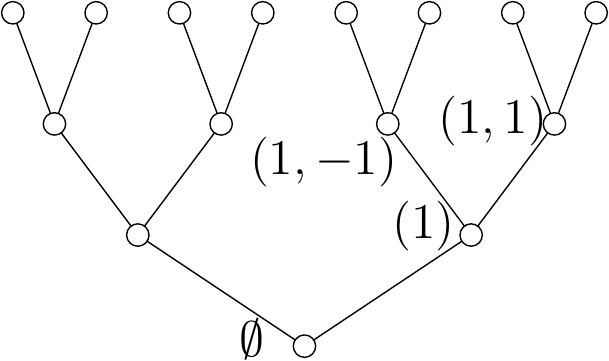}
\end{subfigure}%
\begin{subfigure}[t]{0.33\textwidth}
		\includegraphics[width=\textwidth,page=2]{figures/tree_embed.pdf}
\end{subfigure}%
\begin{subfigure}[t]{0.33\textwidth}
		\includegraphics[width=\textwidth,page=3]{figures/tree_embed.pdf}
\end{subfigure}%
\caption{{\em Left}: a part of $\mathbb{T}$, with some node labels shown. {\em Center}: a binary tree $\rT$. {\em Right}: $\rT$ viewed as a subtree of $\mathbb{T}$.}
\label{fig:treeinatree}
\end{figure}

By Proposition~\ref{prop:bijection}, the closure operation associates to each tree $\rT_n$ a map $\rM_n$ which is uniformly distributed in $\cM_n$. It therefore seems reasonable to expect that applying the closure rules to $\rT_{\infty}$ yields a map $\rM_{\infty}$ and that $\rM_n \stackrel{\mathrm{a.s.}}{\rightarrow} \rM_{\infty}$. This is indeed the case, and proving so is the subject of the remainder of the section.
\begin{wrapfigure}[15]{R}{.25\textwidth}
\rule[5pt]{.25\textwidth}{0.5pt}
                 \includegraphics[width=.25\textwidth,page=1]{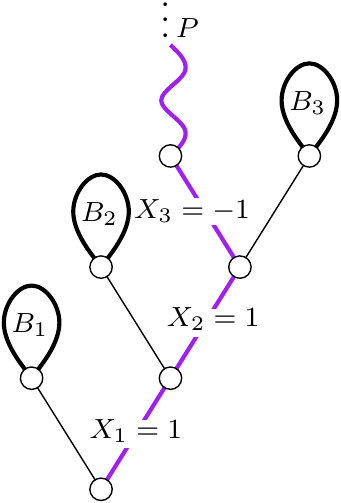}
\caption{\small A part of\ $\rT_{\infty}$, with its unique infinite path in purple.}
\label{fig:binaryiic}
\rule[8pt]{.25\textwidth}{0.5pt} 
\end{wrapfigure}

We now view $\rT_{\infty}$ as a binary plane tree (rather than as a subtree of $\mathbb{T}$). 
The set of corners $\cC(\rT_{\infty})$ consists pairs $(e,e')$, where $e'$ follows $e$ in the counterclockwise walk around (the unique, infinite face of) $\rT_{\infty}$.\footnote{We call it a walk rather than a tour since it is not closed.} Write $\prec$ for the total order on $\cC(\rT_{\infty})$ given by this walk.

The bud corners $\cC_B(\rT_{\infty})$ and internal corners $\cC_I(\rT_{\infty})$ are defined as before. Write $\cC^\ell$ and $\cC^r$ for the set of corners following and preceding $\rho$ in the counterclockwise walk around $\rT_{\infty}$, respectively. Define labels $S_{\rT_{\infty}}: \cC(\rT_{\infty}) \to \Z$ exactly as in (\ref{eq:labeldef}). The second description of the labels, given just after (\ref{eq:labeldef}) for finite trees, again applies: 
$S_{\rT_{\infty}}(\rho)=-4\I{\rho~\mathrm{is~a~bud~corner}}$, and 
in a counterclockwise walk, labels decrease by one when leaving an internal corner and increase by three when leaving a bud corner (except when the walk arrives at the root corner; then one must additionally subtract six). 

For $\kappa \in \cC_B(\rT_{\infty})$, let $\sigma(\kappa)=\sigma_{\rT_{\infty}}(\kappa)$ be the first corner $\kappa'$ following $\kappa$ in the counterclockwise walk around $\rT_{\infty}$ for which $S_{\rT_{\infty}}(\kappa') \le S_{\rT_{\infty}}(\kappa)-6\I{\kappa \prec \rho \preceq \kappa'}$, if such a corner exists. Otherwise, set $\sigma_{\rT_{\infty}}(\kappa)=-\infty$. It is immediate that if $\sigma_{\rT_{\infty}}(\kappa) \ne -\infty$ then $S_{\rT_{\infty}}(\sigma(\kappa)) = S_{\rT_{\infty}}(\kappa)-6\I{\kappa \prec \rho \preceq \sigma(\kappa)}$. The set of corners $\kappa$ with $\sigma(\kappa)=-\infty$ is the analogue of the set $\Delta$ of corners attaching to the hexagon when closing a finite binary tree. The next proposition states that this set vanishes in the $n \to \infty$ limit, and is my excuse for the paper's subtitle.

\begin{prop}
There are almost surely no corners $\kappa \in \cC(\rT_{\infty})$ with $\sigma_{\rT_{\infty}}(\kappa)=-\infty$. 
\end{prop}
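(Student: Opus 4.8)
The plan is to reduce the proposition to a recurrence statement for a simple random walk running along the spine of $\rT_\infty$. The first step is a reformulation: it suffices to show that, almost surely, $\liminf_\kappa S_{\rT_\infty}(\kappa) = -\infty$, where the $\liminf$ is over corners $\kappa$ with $\kappa \succ \rho$ listed in the order of the forward counterclockwise walk; equivalently, that for every $m \in \Z$ there are infinitely many such $\kappa$ with $S_{\rT_\infty}(\kappa) < m$. Granting this, fix any bud corner $\kappa$ and split into cases according to the position of $\kappa$ relative to $\rho$: if $\kappa \succ \rho$, then infinitely many of the guaranteed low-label corners lie beyond $\kappa$ and the indicator $\I{\kappa \prec \rho \preceq \kappa'}$ vanishes, while if $\kappa \preceq \rho$ (including $\kappa = \rho$) the indicator equals $1$ and one takes $m = S_{\rT_\infty}(\kappa) - 6$; either way one exhibits a corner $\kappa'$ certifying $\sigma_{\rT_\infty}(\kappa) \neq -\infty$. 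A countable intersection over $m$ then handles all bud corners simultaneously.

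The second step is to compute the labels of corners incident to the spine $v_0 = r(\rT_\infty), v_1, v_2, \dots$ of $\rT_\infty$. For $\kappa$ incident to $v_i$ with $i \ge 1$, the path $\lrb{r(\rT_\infty), v_i}$ is exactly the spine segment $v_0 \cdots v_i$, so \eqref{eq:labeldef} gives $S_{\rT_\infty}(\kappa) = 3N_3(v_i) + N_2(v_i) - N_1(v_i) + \eps_\kappa$. Since only the root of a binary tree can have a third child, $N_3(v_i)$ is constant for $i \ge 1$, and $\eps_\kappa \in \{0,2,4\}$ is bounded; hence $S_{\rT_\infty}(\kappa) = -(N_1(v_i) - N_2(v_i)) + O(1)$ uniformly over corners at $v_i$. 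Now $N_1(v_i) - N_2(v_i)$ counts first-child steps minus second-child steps along the spine, and unwinding the description of $\rT_\infty$ as Kesten's tree (whose spine is driven by the i.i.d.\ fair sequence $(X_i)$) shows that these child-types are i.i.d.\ fair coin flips apart from finitely many initial terms. Thus $N_1(v_i) - N_2(v_i)$ is a simple symmetric random walk up to a bounded term, so $\limsup_i (N_1(v_i) - N_2(v_i)) = +\infty$ almost surely; that is, there is a random subsequence $(i_k)$ along which every corner incident to $v_{i_k}$ has label tending to $-\infty$.

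The third step is to observe that these low-label corners lie on the forward side of $\rho$. For each $i \ge 1$ the edge $\{v_{i-1}, v_i\}$ is a cut edge of $\rT_\infty$ separating the finite component containing $\rho$ from the infinite one; since the forward contour walk from $\rho$ visits infinitely many corners and $\rT_\infty$ is one-ended, it must traverse $\{v_{i-1},v_i\}$, and it does so exactly once, from $v_{i-1}$ to $v_i$ (it never comes back from infinity). The corner visited immediately after this traversal is incident to $v_i$ and satisfies $\kappa \succ \rho$. Combining with the second step, for each $k$ the vertex $v_{i_k}$ carries a forward corner with label $\to -\infty$, which is precisely the reformulation from the first step, completing the argument.

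I expect the only genuine care to be needed in matching conventions: translating the $\mathbb{T}$-description of $\rT_\infty$ (whose spine is governed by $(X_i)$) into the plane-tree picture with its root-edge identification, so as to confirm that the first/second-child sequence along the plane-tree spine really is i.i.d.\ after finitely many terms, and pinning down the ``forward corner at $v_i$'' claim about the bi-infinite contour walk of a one-ended tree. Everything else is standard random-walk recurrence together with bookkeeping of the bounded error terms $3N_3(v_i) + \eps_\kappa$ in \eqref{eq:labeldef}.
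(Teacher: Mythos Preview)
Your approach is correct and essentially identical to the paper's: both reduce the claim to the observation that the labels read along the spine of $\rT_\infty$ behave as a simple symmetric random walk, hence are a.s.\ unbounded below. The paper's version is slightly cleaner in that it identifies the \emph{exact} equality $S_{\rT_\infty}(\kappa_1(v_n)) = \sum_{i=1}^n X_i$ (no $O(1)$ correction needed), and simply notes that $\kappa_1(v_n) \in \cC^\ell$ rather than arguing about cut edges and the bi-infinite contour; but these are differences of presentation, not of substance.
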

\begin{proof}
For any corner $\xi \in \cC(\rT_{\infty})$, all but finitely many elements of $\cC^{\ell}$ follow $\xi$ in the walk. It thus suffices to show that $\inf\{S_{\rT_{\infty}}(\kappa): \kappa \in \rT_{\infty}\}=-\infty$. 

View $\rT_{\infty}$ as built from the path $P$, the random variables $X_i$ and the random trees $B_i$ as above, and for $n \ge 0$ let $v_n$ be the $n$'th node along $P$ (so $v_0=r(\rT_\infty)$). Then for all $n \ge 1$, $\kappa_1(v_n) \in \cC^{\ell}$ and $S_{\rT_{\infty}}(\kappa_1(v_n)) = \sum_{i=1}^n X_i$. Thus $(S_{\rT_{\infty}}(\kappa_1(v_n)),n \ge 1)$ forms a symmetric simple random walk and so $\inf\{S_{\rT_{\infty}}(\kappa_1(v_n)),n \ge 1\} = -\infty$ almost surely. 
\end{proof}
For $\kappa \in \cC_B(\rT_\infty)$ define the closure operation, the closure edge $e_{\kappa}$, and the oriented cycle $C_{\kappa}$ exactly as in Section~\ref{sec:closurebij}. By the minimality of $\sigma(\kappa)$, all corners lying $\kappa'$ within the bounded face of $C_{\kappa}$ have $S_{\rT_{\infty}}(\kappa') > S_{\rT_{\infty}}(\kappa)-6$. Since any infinite path leaving $\kappa$ follows $P$ for all but finitely many steps, and the corners along $P$ take unboundedly large negative values, it follows that the interior of $C_{\kappa}$ contains only finitely many vertices of $\rT_{\infty}$

Let $\rM_{\infty}$ be formed by closing $\kappa$ in the corner $\sigma(\kappa)$ for each $\kappa \in \cC_B(\rT_{\infty})$. 

\begin{prop}
$\rM_{\infty}$ is almost surely locally finite.
\end{prop}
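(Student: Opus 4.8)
The plan is to show that every vertex of $\rM_\infty$ has degree at most $6$; since $\rM_\infty$ is connected (it is built from the connected tree $\rT_\infty$ by identifying vertices and adjoining edges), a uniform degree bound makes $\rM_\infty^{<R}$ a finite submap for every $R$, which is exactly local finiteness. I would work on the almost sure event of the previous proposition, so that every bud corner $\kappa\in\cC_B(\rT_\infty)$ has a genuine attachment corner $\sigma_{\rT_\infty}(\kappa)\in\cC_I(\rT_\infty)$. Then closing all bud corners identifies each bud $\v(\kappa)$ with the internal node $\v(\sigma_{\rT_\infty}(\kappa))$, so $v(\rM_\infty)=I(\rT_\infty)$, and every edge of $\rM_\infty$ incident to an internal node $v$ is either (the image of) one of the three edges of $\rT_\infty$ at $v$, or a closure edge $e_\kappa=(\parent(\v(\kappa)),\v(\sigma_{\rT_\infty}(\kappa)))$ with $\v(\sigma_{\rT_\infty}(\kappa))=v$. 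Hence it suffices to bound the number of bud corners $\kappa$ with $\v(\sigma_{\rT_\infty}(\kappa))=v$.

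The crux is the claim that $\sigma_{\rT_\infty}$ is injective on $\cC_B(\rT_\infty)$. Suppose instead that $\kappa_1\prec\kappa_2$ are bud corners with $\sigma_{\rT_\infty}(\kappa_1)=\sigma_{\rT_\infty}(\kappa_2)=:\kappa^*$. Since $\sigma_{\rT_\infty}(\kappa_i)\succ\kappa_i$, we have $\kappa_1\prec\kappa_2\prec\kappa^*$, so $\kappa_2$ lies strictly between $\kappa_1$ and $\sigma_{\rT_\infty}(\kappa_1)$; minimality of the attachment corner then gives $S_{\rT_\infty}(\kappa_2)>S_{\rT_\infty}(\kappa_1)-6\I{\kappa_1\prec\rho\preceq\kappa_2}$. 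Writing $a=\I{\kappa_1\prec\rho\preceq\kappa_2}$ and $b=\I{\kappa_2\prec\rho\preceq\kappa^*}$, the interval $(\kappa_1,\kappa^*]$ is the disjoint union of $(\kappa_1,\kappa_2]$ and $(\kappa_2,\kappa^*]$, so $\I{\kappa_1\prec\rho\preceq\kappa^*}=a+b$; combining the identities $S_{\rT_\infty}(\kappa^*)=S_{\rT_\infty}(\kappa_1)-6(a+b)$ and $S_{\rT_\infty}(\kappa^*)=S_{\rT_\infty}(\kappa_2)-6b$ yields $S_{\rT_\infty}(\kappa_2)=S_{\rT_\infty}(\kappa_1)-6a$, contradicting the strict inequality above. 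I would emphasise that this uses in an essential way that $\prec$ is a \emph{total} order on $\cC(\rT_\infty)$: for a finite tree the contour order is cyclic, the interval decomposition fails, and $\sigma$ genuinely need not be injective (this is the ``stacking'' visible in Figure~\ref{fig:growth}).

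Granting injectivity, each internal corner of $\rT_\infty$ is the attachment corner of at most one bud corner, and $v$ has exactly three incident corners, so at most three bud corners attach at $v$; therefore $\deg_{\rM_\infty}(v)\le 3+3=6$ for every $v\in v(\rM_\infty)$, and $\rM_\infty$ is locally finite. I expect the injectivity claim to be the only real obstacle — in particular the indicator bookkeeping $\I{\kappa_1\prec\rho\preceq\kappa^*}=a+b$, and the conceptual point that one-endedness of $\rT_\infty$ (equivalently, linearity of the contour order) is precisely what rules out the stacking that occurs for finite trees. Everything else — identifying $v(\rM_\infty)$ with $I(\rT_\infty)$, and passing from ``all degrees finite'' to ``all balls finite'' — is routine.
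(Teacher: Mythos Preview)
Your degree bound $\deg_{\rM_\infty}(v)\le 6$ is false, and the injectivity argument does not establish it. The gap is the step ``minimality of the attachment corner then gives $S_{\rT_\infty}(\kappa_2)>S_{\rT_\infty}(\kappa_1)-6a$''. In Section~\ref{sec:closurebij} the attachment $\sigma(\kappa)$ is defined as the first \emph{internal} corner with small enough label, and the closure for $\rT_\infty$ is defined by reference to that section. Minimality therefore constrains only \emph{internal} corners in $(\kappa_1,\kappa^*)$; it says nothing about a \emph{bud} corner $\kappa_2$ lying there. Such a $\kappa_2$ may well have $S(\kappa_2)=S(\kappa_1)$ (with no root crossing), and then $\sigma(\kappa_2)=\kappa^*$ as well. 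This is exactly the stacking encoded by the set $K(\kappa)$ in Section~\ref{sec:budgrowth-mapgrowth}, and it is \emph{not} an artefact of cyclic versus linear contour order: your indicator decomposition $\I{\kappa_1\prec\rho\preceq\kappa^*}=a+b$ works just as well cyclically, so that cannot be what separates the finite and infinite cases. If instead you read the paper's phrasing for $\rT_\infty$ literally and allow $\sigma(\kappa)$ to land on bud corners, then injectivity does go through---but now $\v(\sigma(\kappa))$ need not be a vertex of $\rM_\infty$: it may be a bud, itself identified with yet another vertex, and the resulting chains again make the degree at the terminal internal node unbounded.

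Either way, vertex degrees in $\rM_\infty$ are genuinely unbounded (as they must be for a local limit of irreducible quadrangulations; cf.\ the exponential-tail remark in Section~\ref{sec:conc}). The paper's proof does not seek a uniform bound: it shows that for each internal corner $\xi$ the set $\{\kappa\in\cC_B(\rT_\infty):\sigma(\kappa)=\xi\}$ is \emph{finite}---not a singleton---by exhibiting a bud corner $\pi(\xi)\prec\xi$ with $S_{\rT_\infty}(\pi(\xi))<S_{\rT_\infty}(\xi)-6$, so that any $\kappa$ with $\sigma(\kappa)=\xi$ is forced to lie in the finite interval $(\pi(\xi),\xi)$.
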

\begin{proof}
First observe that for $\kappa,\xi \in \cC_B(\rT_{\infty})$, if $\kappa \prec \xi \prec \sigma(\kappa)$ then $\kappa \prec \sigma(\xi) \preceq \sigma(\kappa)$.  It follows that if $\xi \prec \kappa$ then either $\sigma(\xi) \prec \kappa$ or $\sigma(\kappa) \prec \sigma(\xi)$. 
Since $\rT_{\infty}$ is a.s.\ locally finite and the cycles $C_{\kappa}$ a.s.\ have finite interior, it suffices to show that a.s.\ for all internal corners $\xi$ of $\rT_{\infty}$, the set $\{\kappa \in \cC_B(\rT_{\infty}): \sigma(\kappa)=\xi\}$ is a.s.\ finite. Let $\pi(\xi)$ be the maximal corner $\xi'$ (with respect to $\prec$) preceding $\xi$ for which $S_{\rT_{\infty}}(\xi') < S_{\rT_{\infty}}(\xi)-6$. 

All but finitely many corners preceding $\xi$ lie in $\cC_r$, and a similar argument to that above shows that the corners along $P$ lying in $\cC_r$ take unboundedly large negative values. Since  a.s.\ only finitely many corners lie between any two corners of $\cC(\rT_{\infty})$ with respect to $\prec$, it follows that $\pi(\xi)$ is well-defined. Furthermore, $\pi(\xi)$ must be a bud corner since its $\prec$-successor has a larger label than its own. It follows that $\xi \prec \sigma(\pi(\xi))$. Thus, by the observation from the start of the first paragraph, if $\kappa$ is any bud corner with $\sigma(\kappa)=\xi$ then necessarily $\pi(\xi) \prec \kappa \prec \xi$; and there are only finitely many such corners. 
\end{proof}
\begin{cor}\label{cor:mnweak}
$\rM_{n} \stackrel{\mathrm{a.s.}}{\rightarrow} \rM_{\infty}$ as $n \to \infty$. 
\end{cor}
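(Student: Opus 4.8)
The plan is to fix $R > 0$ and show that, almost surely, $\rM_n^{<R}$ is isomorphic as a rooted map to $\rM_\infty^{<R}$ for all sufficiently large $n$; since $\rT_n \to \rT_\infty$ almost surely, this is precisely the assertion of the corollary. Note first that the root of $\rM_n$ is the image of the root edge of $\rT_n$, which is the same oriented edge $(1,-1)$ of $\mathbb{T}$ for every $n$ (and for $\rT_\infty$), so there is a fixed root vertex throughout. Next, using local finiteness of $\rM_\infty$ (the preceding proposition), together with the facts that each cycle $C_\kappa$ has finite interior and that $\{\kappa \in \cC_B(\rT_\infty) : \sigma_{\rT_\infty}(\kappa)=\xi\}$ is finite for every internal corner $\xi$, one extracts a finite collection of ``data'' $D_\infty$ inside $\rT_\infty$ --- finitely many internal nodes and tree edges of $\rT_\infty$; finitely many bud corners $\kappa$ together with their attachment corners $\sigma_{\rT_\infty}(\kappa)$ and all corners lying $\prec$-between them; and the tree paths joining $\v(\sigma_{\rT_\infty}(\kappa))$ to $\parent(\v(\kappa))$ --- such that performing exactly the recorded closures on this partial tree produces a map containing $\rM_\infty^{<R}$ and agreeing with $\rM_\infty$ on it, while no bud corner of $\rT_\infty$ outside $D_\infty$ is closed onto a vertex within $\rM_\infty$-distance $R$ of the root.

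The core of the argument is that $D_\infty$ ``appears inside $\rT_n$ with the identical closures'' once $n$ is large. Three consistency facts drive this. (a) $\rT_n \uparrow \rT_\infty$, so every finite subtree of $\rT_\infty$ --- in particular all nodes, edges, and corners recorded in $D_\infty$ --- lies in $\rT_n$ for $n$ large. (b) A bud corner of $\rT_\infty$ is a bud corner of every $\rT_n$ in which it occurs, because growth never affects a corner it does not grow at. (c) Labels are unchanged under growth elsewhere: $S_{\rT_n}(\kappa')=S_{\rT_\infty}(\kappa')$ for every corner $\kappa'$ common to $\rT_n$ and $\rT_\infty$, just as $S_{\rT}(\kappa')=S_{\rT^+}(\kappa')$ for a single growth step. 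Given (a)--(c), fix a bud corner $\kappa$ recorded in $D_\infty$. Since a.s.\ $\sigma_{\rT_\infty}(\kappa)=\kappa^* \ne -\infty$, minimality says $S_{\rT_\infty}(\kappa^*)$ equals the threshold $S_{\rT_\infty}(\kappa)-6\I{\kappa \prec \rho \preceq \kappa^*}$ while every corner strictly $\prec$-between $\kappa$ and $\kappa^*$ exceeds the relevant threshold --- a statement about the finitely many corners recorded in $D_\infty$, whose labels and whose position relative to the fixed corner $\rho$ agree in $\rT_n$ by (a) and (c). Hence $\sigma_{\rT_n}(\kappa)=\kappa^*$ as well (in particular $\kappa \notin \Delta(\rT_n)$), so $\kappa$ closes onto the same corner in $\rT_n$ as in $\rT_\infty$. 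One must also exclude \emph{spurious} closures --- a bud corner of $\rT_n$ not recorded in $D_\infty$ whose $\rT_n$-attachment reaches a vertex near the root. The same first-corner-below-threshold principle does this: any such corner $\kappa'$ is also a bud corner of $\rT_\infty$, so either $\sigma_{\rT_\infty}(\kappa')$ is a genuine corner lying outside the root region (forcing $\sigma_{\rT_n}(\kappa')$ to equal it, hence to lie outside the region, for $n$ large) or $\sigma_{\rT_\infty}(\kappa')=-\infty$, which is a.s.\ impossible by the first proposition of this section --- and its proof, via the symmetric simple random walk $(S_{\rT_\infty}(\kappa_1(v_n)))_{n \ge 1}$ along $P$ tending to $-\infty$, is exactly what guarantees that for $n$ large neither $\Delta(\rT_n)$ nor any far-away attachment intrudes on a fixed ball around the root.

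Putting the two halves together: for all large $n$, carrying out on $\rT_n$ the closures recorded in $D_\infty$ yields the same vertex identifications, the same edges, and the same local planar structure around the root as carrying out the corresponding closures on $\rT_\infty$, and the choice of $D_\infty$ makes this common structure equal to $\rM_\infty^{<R}$, with nothing else in $\rM_n$ intruding into the radius-$R$ ball. Therefore $\rM_n^{<R} \cong \rM_\infty^{<R}$ as rooted maps for all large $n$, which is the definition of $\rM_n \to \rM_\infty$; and this holds almost surely because $\rT_n \to \rT_\infty$ almost surely. The main obstacle is the middle step --- tracking attachment corners through the growth operation --- where one must carefully reconcile the cyclic order on $\cC(\rT_n)$ with the linear order on $\cC(\rT_\infty)$ (equivalently, handle the correction terms $\I{\kappa \prec \rho \preceq \cdot}$ and the hexagon set $\Delta(\rT_n)$), and check that the finiteness estimates of the preceding propositions are quantitatively strong enough that no closure originating far out in $\rT_n$ ever reaches a fixed ball around the root.
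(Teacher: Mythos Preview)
Your approach is essentially the paper's: show that for each bud corner $\kappa$ of $\rT_\infty$ the attachment $\sigma_{\rT_n}(\kappa)$ stabilises at $\sigma_{\rT_\infty}(\kappa)$ once both corners are present in $\rT_n$, and then invoke local finiteness of $\rM_\infty$ to conclude that only finitely many such stabilisations matter for $\rM_\infty^{<R}$. The paper carries out the stabilisation step slightly differently: rather than your direct label argument (facts (a)--(c)), it cites the growth analysis of Section~\ref{sec:budgrowth-mapgrowth}, namely that whenever $\sigma_{\rT_{n+1}}(\kappa)\ne\sigma_{\rT_n}(\kappa)$ the new value is the freshly created corner $\kappa_1(\v(\xi_n))\notin\cC(\rT_n)$; from this it reads off that $\sigma_{\rT_n}(\kappa)=\sigma_{\rT_\infty}(\kappa)$ for all $n\ge\max(\tau(\kappa),\tau(\sigma_{\rT_\infty}(\kappa)))$. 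Your static argument via persistence of labels is equally valid and arguably more self-contained.

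There is one imprecision worth flagging in your treatment of spurious closures. You assert that ``any such corner $\kappa'$ is also a bud corner of $\rT_\infty$''. As written this is false: for every finite $n$ there exist bud corners of $\rT_n$ (e.g.\ $\xi_n$ itself) whose vertices are internal in $\rT_\infty$, and these are not corners of $\rT_\infty$ at all. What is true --- and what you need --- is that \emph{for $n$ sufficiently large}, any bud corner of $\rT_n$ whose $\rT_n$-attachment lies in a fixed finite set $\Xi$ of internal corners is already a bud corner of $\rT_\infty$. This follows from the window argument in the local-finiteness proposition: each $\xi\in\Xi$ has an associated $\pi(\xi)\in\cC_B(\rT_\infty)$ such that every $\kappa$ attaching to $\xi$ (in $\rT_n$ or $\rT_\infty$) lies in the finite interval between $\pi(\xi)$ and $\xi$; once $n$ is large enough that every corner of $\rT_\infty$ in $\bigcup_{\xi\in\Xi}(\pi(\xi),\xi)$ is a corner of $\rT_n$, the bud corners of $\rT_n$ in these windows coincide with those of $\rT_\infty$. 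With this correction your argument goes through; the paper's use of the dynamic fact from Section~\ref{sec:budgrowth-mapgrowth} packages this step more tersely.
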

\begin{proof}
Recall that $\rT_{n+1}$ is obtained from $\rT_n$ by growing at corner $\xi_n$, and the description 
in Section~\ref{sec:budgrowth-mapgrowth} of how such growth transforms the associated map. It follows from this description that for $\kappa \in \cC_B(\rT_{n})$, if $\sigma_{\rT_{n+1}}(\kappa) \ne \sigma_{\rT_{n}}(\kappa)$ then $\sigma_{\rT_{n+1}}(\kappa)=(\{\parent(\v(\xi_n)),\v(\xi_n)\},\{\v(\xi_n),c_1(\v(\xi_n))\})$. Since $c_1(\v(\xi_n))$ is not a node of $\rT_n$, it follows that $\sigma_{\rT_{n+1}}(\kappa) \not \in \cC_B(\rT_n)$. 

Now, for each corner $\kappa \in \cC(\rT_{\infty})$, let $\tau(\kappa) = \inf\{m: \kappa \in \cC(\rT_m)\}$; then $\tau(\kappa)$ is almost surely finite. By the fact in the preceding paragraph, for all $n \ge \max(\tau(\kappa),\tau(\sigma_{\rT_{\infty}}(\kappa)))$, we have $\sigma_{\rT_n}(\kappa)=\sigma_{\rT_\infty}(\kappa)$. Since $\rM_{\infty}$ is a.s.\ locally finite, it follows that for any $R>0$, there is an a.s.\ finite time $n_0$ such that for all $m,n \ge n_0$, $\rM_n^{< R}$ and $\rM_m^{< R}$ are isomorphic. 
\end{proof}

\section{\large {\bf Three-connected maps}}\label{sec:3-connectedmaps}
\begin{wrapfigure}[19]{R}{.3\textwidth}
\rule[5pt]{.3\textwidth}{0.5pt}
                 \includegraphics[width=.3\textwidth,page=1]{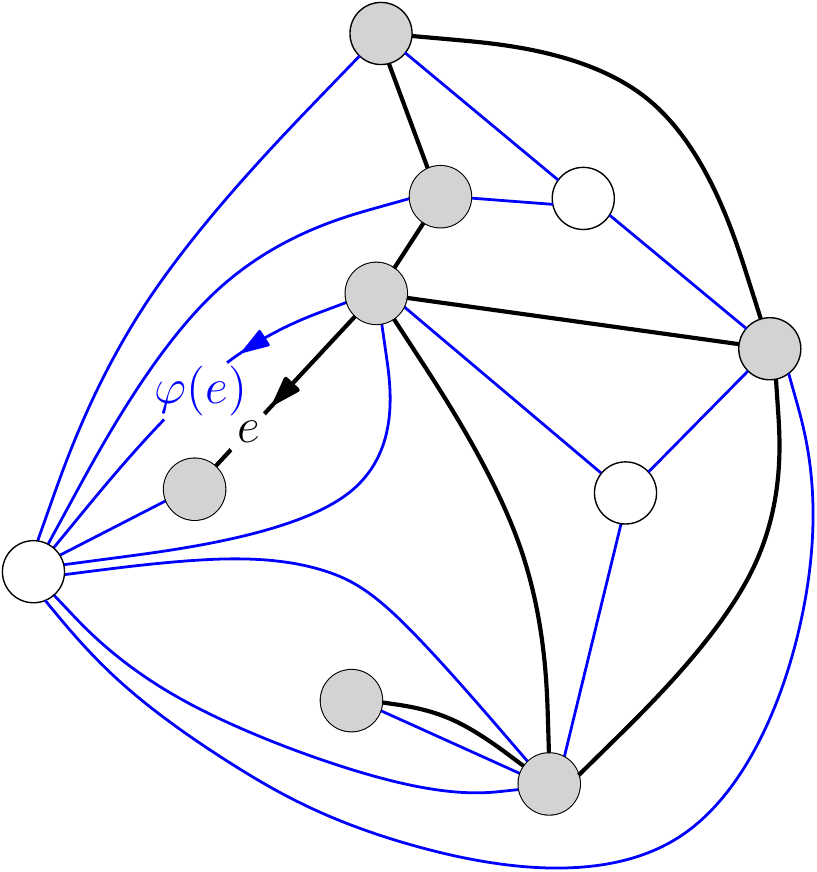}
\caption{\small A rooted map (with grey nodes and black edges) and its image under the angular mapping (with white and grey nodes and blue edges).}
\label{fig:angular}
\rule[8pt]{.3\textwidth}{0.5pt} 
\end{wrapfigure}
Given a locally finite map $G$, the {\em angular mapping} associates to $G$ a quadrangulation $Q$ as follows. Add a vertex $v_f$ in each face $f$ of $G$. For each corner $\kappa$ incident to $f$, add an edge between $v_f$  and $\v(\kappa)$ attaching in corner $\kappa$. Then erase the edges of $G$ (see Figure~\ref{fig:angular}). Here consider $G$ and $Q$ as embedded in $\mathbb{S}^2$ (in $\R^2$ there is a choice of how to draw the edges of $Q$ lying in the unbounded face of $G$), but in $\mathbb{S}^2$ this choice vanishes. Then the angular mapping is a $2$-to-$1$ map; its inverse images may be found by properly $2$-coloring the vertices of $Q$, and choosing one of the two colour classes to form the nodes of $G$. Note that the number of edges of $G$ is the number of faces of $Q$. 

There is a natural function $\varphi$ taking oriented edges of $G$ to oriented edges of $Q$: for $uv$ an oriented edge of $G$, let $f$ be the face of $G$ lying to the right of $G$, and let $\varphi(uv)=uv_f$. Note that the tail of $\varphi(uv)$ is always a node of $G$. This yields an extension of the angular mapping to rooted maps, which sends $(G,e)$ to $(Q,\varphi(e))$. The mapping is now bijective, since the orientation of $\varphi(e)$ determines which of the colour classes of $Q$ forms the nodes of $G$. 

By Proposition~\ref{prop:bijection}, for any irreducible quadrangulation of a hexagon $\rM \in \rM$, we may view $\rM$ as arising from a binary tree $\rT$ by the closure bijection. We may thus canonically label the nodes of the hexagonal face of $\rM$ with labels $0,1,2,3,4,5$. We hereafter view all $\rM \in \rM$ as endowed with such labels, and for $i \in \{0,1,2,3,4,5\}$ write $\rM(i)$ for the rooted quadrangulation obtained from $\rM_n$ by adding an edge $e^*=\{i,i+3 \mod 6\}$ from $i$ to the diagonally opposing node of the hexagon. 

Now let $(\rM_n,1 \le n \le \infty)$ be as in Section~\ref{sec:growing}. Independently of $(\rM_n,1 \le n \le \infty)$, let $U$ be uniformly distributed in $\{0,1,2,3,4,5\}$. For $1 \le n \le \infty$, let $\rQ_n=\rM_n(U)$. 
Then $\rQ_n$ is a rooted quadrangulation, and we let $\rG_n$ be the associated rooted map under the inverse of the angular mapping. 

It is well-known (see, e.g., \cite[Theorem 3.1]{fusy08dissection}) that if a quadrangulation $M$ is associated with a map $G$ under the angular mapping, then $G$ is $3$-connected if and only if $M$ is irreducible. The quadrangulation $\rQ_n$ need not be irreducible, since there may be a $3$-edge path between the endpoints of $e^*$ passing through the interior of the hexagon in $\rM_n$. However, it turns out that $\rQ_n$ {\em is} irreducible with uniformly positive probability, and hence $\rG_n$ is $3$-connected with uniformly positive probability. The main point of this section is to show (in Proposition~\ref{prop:gnlim}) that $\rG_n$ a.s.\ converges, and to identify the limit (in Theorem~\ref{thm:3con})  as ``the uniform infinite $3$-connected planar map'' (or -- thanks to Whitney's theorem -- graph). However, we first briefly describe how the growth dynamics modify $\rG_n$ (though this information is not in fact needed in the paper). 

The effect on $\rG_n$ of growing $\rT_n$ depends on whether the growing bud corner is attached to a primal or a facial vertex. If it is attached to a facial vertex then growing $\rT_n$ adds an edge within the face. If it is attached to a primal vertex then growing instead ``uncontracts'' the primal vertex, turning it into two vertices joined by an edge; this may be seen as adding an edge to the facial dual graph of $\rG_n$. In either case, this is a ``local'' modification in that it only affects the nodes that lie on a common face with a given vertex. We now turn to the main business of the section.

\begin{prop}\label{prop:gnlim}
$\rG_n \stackrel{\mathrm{a.s.}}{\rightarrow} \rG_{\infty}$ as $n \to \infty$.
\end{prop}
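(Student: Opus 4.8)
\emph{Proof plan.} The plan is to pass through the intermediate quadrangulations $\rQ_n=\rM_n(U)$. By Corollary~\ref{cor:mnweak} we already have $\rM_n\stackrel{\mathrm{a.s.}}{\rightarrow}\rM_\infty$, and $\rQ_n$ differs from $\rM_n$ only by the single extra edge $e^*=\{U,U+3\bmod 6\}$ joining two nodes of the hexagonal face. So there are two things to do: (i) show that $e^*$ ``escapes to infinity'', so that in fact $\rQ_n\stackrel{\mathrm{a.s.}}{\rightarrow}\rM_\infty$; and (ii) show that the inverse angular mapping is continuous for the local topology on rooted maps. Since $\rM_\infty$ has no hexagonal face (see below), $\rM_\infty(U)=\rM_\infty$, so $\rG_\infty$ is just the inverse angular image of $\rM_\infty$, the value of $U$ being irrelevant in the limit; granting (i) and (ii), the Proposition follows.

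For (i), the crucial point is that the six hexagon nodes of $\rM_n$ recede from the root: almost surely $\min_{0\le i\le 5}d_{\rM_n}(r(\rM_n),i)\to\infty$ as $n\to\infty$. To see this, note first that the preceding Proposition gives $\Delta(\rT_\infty)=\emptyset$ a.s., so $\rM_\infty$ is obtained by closing every bud corner of $\rT_\infty$ to an internal corner and hence has no hexagonal face — all its faces have degree $4$. On the other hand, by the proof of Corollary~\ref{cor:mnweak}, for each $R$ there is an a.s.\ finite $n_0$ such that for $n\ge n_0$ the closure structure of $\rT_n$ in a neighbourhood of the root agrees with that of $\rT_\infty$, so that the faces of $\rM_n$ incident to nodes at distance less than $R$ from $r(\rM_n)$ are also faces of $\rM_\infty$ and hence all of degree $4$. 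In particular the hexagonal face of $\rM_n$, and with it its six incident nodes, lie at distance at least $R$ from $r(\rM_n)$ once $n\ge n_0$; as $R$ was arbitrary the claim follows. A deterministic argument then finishes (i): if both endpoints of $e^*$ lie at $\rM_n$-distance greater than $2R$ from $r(\rM_n)$, then any path through $e^*$ has length exceeding $2R$ and is therefore never a geodesic between two vertices within distance $R$ of the root, so the induced ball is unchanged, $\rQ_n^{<R}=\rM_n^{<R}$; combining with $\rM_n^{<R}\cong\rM_\infty^{<R}$ for large $n$ gives $\rQ_n\stackrel{\mathrm{a.s.}}{\rightarrow}\rM_\infty$.

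For (ii), recall that under the inverse angular mapping each face of the quadrangulation becomes an edge of the primal map joining the two primal corners of that face, which are the unique pair of opposite corners of the face and so lie at mutual distance $2$; thus $d_{\rG}(u,v)\le d_{\rQ}(u,v)\le 2\,d_{\rG}(u,v)$ for primal vertices $u,v$, while the splitting of the vertices of $\rQ$ into primal and dual is recovered near the root from the parity of the $\rQ$-distance to $r(\rM_n)$, which by definition of the mapping lies in the primal class. It follows that the rooted ball $\rG^{<R}$ is a deterministic function of the rooted ball $\rQ^{<2R+2}$: one reads off the $2$-colouring from the root, lists the quadrangular faces incident to a primal vertex at $\rQ$-distance less than $2R$ (all of whose vertices then lie at $\rQ$-distance at most $2R+1$), replaces each such face by the edge joining its two primal corners, and keeps the primal vertices at $\rG$-distance less than $R$. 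Hence $\rQ_n^{<2R+2}\cong\rQ_m^{<2R+2}$ implies $\rG_n^{<R}\cong\rG_m^{<R}$, and together with (i) this yields $\rG_n\stackrel{\mathrm{a.s.}}{\rightarrow}\rG_\infty$.

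I expect the conceptual heart to be (i) — this is exactly the ``escape from the hexagon'' of the subtitle, and it is what lets the construction converge at all — but given Corollary~\ref{cor:mnweak} its proof is short. The step demanding genuine care is the bookkeeping in (ii): pinning down precisely which faces of $\rQ$ near the root, and how much of the rotation system, are needed to reconstruct $\rG^{<R}$, and in particular handling the quadrangular faces that straddle the boundary of the ball. This is standard for the angular mapping, but it is the most technical point.
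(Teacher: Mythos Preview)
Your approach is essentially the same as the paper's: reduce to the a.s.\ convergence of $\rQ_n$ and then use the metric comparison $d_{\rG}\ge d_{\rQ}/2$ under the angular mapping to deduce convergence of $\rG_n$. The paper's proof is terser --- it simply asserts $\rQ_n\stackrel{\mathrm{a.s.}}{\rightarrow}\rQ_\infty$ and then states the one-sided distance bound --- whereas you explicitly justify the escape of $e^*$ (your step~(i)) and spell out the reconstruction of $\rG^{<R}$ from a $\rQ$-ball (your step~(ii)); these are exactly the details the paper elides.
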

\begin{proof}
Fix any map $G$ and let $M$ be the image of $G$ under the angular mapping. Then for all $\{u,v\} \in e(G)$, there exists a path of length two between $u$ and $v$ in $Q$. Thus, for all $x,y \in v(G)$, $d_G(x,y) \ge d_M(x,y)/2$, where $d_G$ and $d_Q $ denote graph distance in $G$ and $Q$ respectively. Since $\rQ_n$ and $\rG_n$ have the same root node and $\rQ_n \stackrel{\mathrm{a.s.}}{\rightarrow} \rQ_{\infty}$, the result follows. 
\end{proof}
\begin{thm}\label{thm:3con}
Let $\hat{\rG}_n$ be uniformly distributed on the set of $3$-connected rooted maps with $n+4$ edges; then $\hat{\rG}_n$ converges in distribution to $\rG_{\infty}$ in the local weak sense. In particular, the $\rG_{\infty}$ is almost surely three-connected. 
\end{thm}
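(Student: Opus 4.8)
The plan is to show that the two sources of randomness in $\rG_n$ — the root corner of the binary tree $\rT_n$ and the choice of hexagon diagonal $U$ — become, in the limit, ``independent'' of the structure seen from the root, and that conditioned on being $3$-connected, $\hat{\rG}_n$ agrees with $\rG_n$ near the root. Concretely, I would proceed via the following steps.

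\textbf{Step 1: Local structure of $\rM_n$ near the root decouples from the hexagon.} By Proposition~\ref{prop:bijection}, $\rM_n$ is the closure of a uniform $\rT_n \in \cT_n$, and the Luczak--Winkler coupling realizes $\rT_n$ as the first $n$ internal nodes of $\rT_\infty$. The key point (alluded to in Section~\ref{sec:3-connectedmaps} and Corollary~\ref{cor:mnweak}) is that for any fixed radius $R$, the ball $\rM_\infty^{<R}$ around the root is determined by finitely many corners of $\rT_\infty$ and their $\sigma$-images, all of which stabilize after an a.s.\ finite time, and none of which is a hexagon node in the limit (the set $\Delta$ vanishes, by the Proposition showing $\sigma_{\rT_\infty}(\kappa)\neq-\infty$ always). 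So the hexagonal face ``escapes to infinity,'' and $\rM_\infty^{<R}$ is a deterministic function of an a.s.\ finite piece of $\rT_\infty$ near the infinite spine. I would make this quantitative: for any $\eps>0$ there is $r=r(R,\eps)$ such that with probability $\ge 1-\eps$, $\rM_n^{<R}$ depends only on $\rT_n$ restricted to the ball of radius $r$ around $r(\rT_n)$, uniformly in $n$.

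\textbf{Step 2: Transfer to $\rQ_n$ and $\rG_n$.} Since $\rQ_n=\rM_n(U)$ differs from $\rM_n$ only by the single edge $e^*$ joining two diagonally opposite hexagon nodes, and by Step 1 these hexagon nodes are a.s.\ at distance $\to\infty$ from the root, for any fixed $R$ we have $\rQ_n^{<2R}=\rM_n^{<2R}$ with probability $\to 1$. Applying the distance comparison $d_G\ge d_M/2$ from the proof of Proposition~\ref{prop:gnlim} (together with its converse $d_M\le 2d_G$), $\rG_n^{<R}$ is determined by $\rQ_n^{<2R}$, hence by a finite piece of $\rT_n$ near its root, with probability $\to 1$ uniformly in $n$. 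Thus $\rG_n^{<R}$ has a limiting law, namely that of $\rG_\infty^{<R}$ (using Proposition~\ref{prop:gnlim}).

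\textbf{Step 3: Identify the limit law with that of $\hat{\rG}_n$.} Here I use the enumerative/combinatorial dictionary. By Proposition~\ref{prop:bijection}, $\rM_n$ is uniform in $\cM_n$; adding the diagonal $e^*$ at the uniformly random position $U$, and passing through the $2$-to-$1$ angular mapping (made bijective on rooted maps by $\varphi$), the conditional law of $\rG_n$ given that $\rQ_n$ is irreducible is — up to the bookkeeping of the $6$ choices of $U$ and the fact that an irreducible quadrangulation of the hexagon plus a diagonal is an irreducible quadrangulation of the sphere — uniform on $3$-connected rooted maps with $n+4$ edges. One must check that the combinatorial bijection ``close the hexagon with a diagonal'' is (generically, i.e.\ when irreducibility holds) a bijection between $\cM_n$-with-a-marked-diagonal and $3$-connected rooted quadrangulations, hence between $\rG_n$ conditioned to be $3$-connected and $\hat{\rG}_n$. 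Then for fixed $R$: $\p{\rG_n^{<R}\cong H} \to \p{\rG_\infty^{<R}\cong H}$ by Step 2, and on the event $\{\rQ_n \text{ irreducible}\}$ (which has probability bounded below, and on which $\rG_n\stackrel{d}{=}\hat{\rG}_n$) the ball $\rG_n^{<R}$ again stabilizes; since the non-irreducible event corresponds to a short path through the hexagon, which is ``far from the root,'' it does not affect $\rG_n^{<R}$ for $n$ large. Combining, $\p{\hat{\rG}_n^{<R}\cong H}\to\p{\rG_\infty^{<R}\cong H}$, which is the claimed local weak convergence. Finally, $3$-connectedness of $\rG_\infty$ follows since $\rQ_\infty$ is irreducible (the set $\Delta$ has vanished, so $\rM_\infty$ has no hexagonal face, and no finite cycle of length $4$ can bound the ``absent'' outer face; irreducibility is a local-to-global property verifiable on finite balls) and the angular-mapping criterion ($G$ $3$-connected $\iff$ $Q$ irreducible) passes to the limit because any $4$-cycle or separating pair in $\rG_\infty$ lives in some finite ball, which agrees with $\rG_n^{<R}$ for large $n$ where $3$-connectivity holds with positive probability.

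\textbf{Main obstacle.} The delicate part is Step 3: reconciling the \emph{conditioned} law (``$\rG_n$ given $\rQ_n$ irreducible'') with the \emph{unconditioned} limit $\rG_\infty$, and simultaneously with the uniform measure $\hat{\rG}_n$. The issue is that conditioning on a global event (irreducibility of $\rQ_n$, equivalently $3$-connectedness of $\rG_n$) could in principle bias the local picture near the root. The resolution must be that the obstruction to irreducibility — a length-$3$ path through the hexagon's interior between the endpoints of $e^*$ — is a purely ``near-hexagon'' event, asymptotically independent of $\rM_n^{<R}$ by the decoupling of Step 1; so conditioning on it does not disturb the law of $\rG_n^{<R}$ in the limit. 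Turning this heuristic into a clean argument — ideally by exhibiting, via the bijections, an exact identity between the relevant finite counting sequences rather than an asymptotic independence estimate — is where the real work lies; everything else is the routine propagation of local weak convergence through the (distance-Lipschitz, locally defined) angular mapping and closure bijection.
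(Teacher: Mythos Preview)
Your outline matches the paper's strategy almost exactly: reduce to showing that the event $E(\rM_n)=\{\rG_n\text{ is }3\text{-connected}\}$ is asymptotically independent of $\rM_n^{<R}$, use the fact (Theorem~4.8 and Lemma~5.1 of \cite{fusy08dissection}) that $\rG_n$ conditioned on $E(\rM_n)$ is uniform on $3$-connected rooted maps with $n+4$ edges, and use $\p{E(\rM_n)}\to 2^8/3^6$ (Proposition~6.1 of \cite{fusy08dissection}) to conclude. You have also correctly located the crux: why conditioning on the global event $E(\rM_n)$ does not bias the local picture.

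Where you stop short is precisely the device the paper uses to resolve your ``Main obstacle,'' and your suggested route (an exact enumerative identity) is not what is done. The paper does \emph{not} argue via counting; it uses a structural splitting of $\rT_n$ along the infinite spine $P$. For $k\ge 1$, let $\rT_n(k,1)$ be the part of $\rT_n$ on the root side of $v_{k+1}$ and $\rT_n(k,2)$ the subtree rooted at $v_k$. The crucial (and easy) fact is that, conditionally on its size, $\rT_n(k,2)$ is a \emph{uniform} binary tree and is independent of $\rT_n(k,1)$. One then defines a high-probability event $A(k,n)$ on which simultaneously (i) $\rM_n^{<R}$ is determined by $\rT_n(k,1)$ (because all relevant buds and their attachment corners sit above $v_k$), and (ii) $E(\rM_n)$ is determined by $\rT_n(k,2)$ (because the nodes that can lie on a short hexagon-to-hexagon path are exactly those incident to a corner with label within $12$ of the minimum, and for large $k,n$ these all lie in $\rT_n(k,2)$; moreover the closures agree, so $E(\rM_n)=E(\rM_n(k,2))$). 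On $A(k,n)$ the two pieces are conditionally independent, and since $\rM_n(k,2)$ given its size $m\in[n/2,n]$ is distributed as $\rM_m$, one gets $\p{E(\rM_n(k,2))\mid\cdots}\approx \p{E(\rM_m)}\to 2^8/3^6$ uniformly. This yields the asymptotic factorization $\p{\rM_n^{<R}\in\cS,\,E(\rM_n)}\approx \p{\rM_n^{<R}\in\cS}\p{E(\rM_n)}$ directly.

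Two smaller remarks. First, your Step~1 localizes $\rM_n^{<R}$ to a graph-ball in $\rT_n$; the paper instead localizes to the spine-truncated subtree $\rT_n(k,1)$, which is what makes the complementary piece conditionally uniform --- a ball in $\rT_n$ would not give you that. Second, for the ``$\rG_\infty$ is a.s.\ $3$-connected'' conclusion, the clean way (and the paper's way) is to note that once $\hat\rG_n\Rightarrow\rG_\infty$ is established, $\rG_\infty$ is a distributional limit of \emph{deterministically} $3$-connected maps, so any putative separating pair would appear in some $\rG_\infty^{<R}$ and hence in $\hat\rG_n^{<R}$ with positive probability, which is impossible; your phrasing ``where $3$-connectivity holds with positive probability'' via $\rG_n$ is not quite enough on its own.
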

\begin{proof}
By Theorem~4.8 and Lemma~5.1 of \cite{fusy08dissection}, the conditional distribution of $\rG_n$, given that it is $3$-connected, is uniform in the set of $3$-connected graphs with $n+4$ edges (note that $\rQ_n$ has $n+4$ faces). Furthermore, by Proposition 6.1 of \cite{fusy08dissection}, $\p{\rG_n\mbox{ is $3$-connected}} \to 2^8/3^6$ as $n \to \infty$. 
Write $\cM(i) = \{\rM \in \cM: \rM(i)~\mbox{is~irreducible}\}$, and observe that we may then write the event that $\rG_n$ is $3$-connected as $E(\rM_n) = \{\rM_n \in \cM(U)\}$. 

We prove the theorem by showing that for fixed $R>0$, $E(\rM_n)$ is asymptotically independent of $\rM_n^{< R}$. More precisely, we show that any set $\mathcal{S}$ of finite rooted planar maps, 
\begin{equation}\label{eq:toprove}
\left|\p{M_n^{<R} \in \cS,E(\rM_n)}
-\p{M_n^{<R} \in \cS} \p{E(\rM_n)}\right| \to 0\, ,
\end{equation}
as $n \to \infty$. Since $\rM_n^{< R} \to \rM_{\infty}^{<R}$ almost surely, and $\rG_n^{<R/2}$ is determined by $\rM_n^{<R}$, the first assertion of the theorem then follows. Having established this, since $\rG_\infty$ is the distributional limit of a sequence of random $3$-connected maps, it must itself be a.s.\ $3$-connected. It thus remains to prove that $E(\rM_n)$ and $\rM_{n}^{<R}$ are indeed asymptotically independent. For the remainder of the proof, we fix $R> 0$ and a set $\cS$ as above. 

We again view $\rT_{\infty}$ as constructed from the infinite path $P$ and the random variables $X_i$ and random trees $B_i$. List the nodes of $P$ as $(v_i,i \ge 1)$. Write $V_{n}^{<R}$ for the set of nodes of $\rT_{n}$ that either (a) are nodes of $\rM_{n}^{<R}$ or (b) are buds whose attachment corner is incident to a node of $\rM_{n}^{<R}$. 

\begin{figure}[ht]
\hspace{-0.5cm}
\begin{subfigure}[t]{0.5\textwidth}
\begin{center}
		\includegraphics[width=0.66\textwidth,page=4]{figures/tree_embed.pdf}
		\end{center}
                \caption{Nodes of $\mathrm{T}_n$ are grey, edges of $P$ are purple.}
\end{subfigure}%
\begin{subfigure}[t]{0.5\textwidth}
\begin{center}
		\includegraphics[width=0.66\textwidth,page=5]{figures/tree_embed.pdf}
		\end{center}
                \caption{The trees $\mathrm{T}_n(k,1)$ and $\mathrm{T}_n(k,2)$ (here $k=1$).\\}
\end{subfigure}%
\caption{Splitting $\mathrm{T}_n$ into $\mathrm{T}_n(k,1)$ and $\mathrm{T}_n(k,2)$.}
\label{fig:treesplit}
\end{figure}
Given $k > 1$, let $\rT_n(k,1)$ be the subtree of $\rT_n$ containing the root edge when all strict descendants of $v_{k+1}$ are removed, and let $\rT_n(k,2)$ be the subtree of $\rT_n$ consisting of $v_{k}$ and all its strict descendants; see Figure~\ref{fig:treesplit}. (If $v_{k+1}$ is not a node of $\rT_n$ we agree that $\rT_n(k,2)$ is empty.) Since $\rT_n$ is an uniformly random binary tree, conditional on its size $\rT_n(k,2)$ is an uniformly random binary tree and is independent of $\rT_n(k,1)$. 

Let $H_n$ be the set of nodes of $\rT_n$ incident to a corner $\kappa$ with $S_{\rT_n}(\kappa) \le s^*(\rT_n)+12$. For any three-edge path in $\rM_n$ joining distinct vertices of the hexagon, if the internal nodes of the path do not lie on the hexagon then they are nodes of $\rT_n$ that neighbour buds which attach to the hexagon. Since buds $\kappa$ attaching to the hexagon have $S_{\rT_n}(\kappa) \le s^*(\rT_n)+11$, it follows that all nodes of such a path belong to $H_n$. 

Write $\rM_n(k,1)$ and $\rM_n(k,2)$ for the closures of $\rT_n(k,1)$ and $\rT_n(k,2)$. Then conditional on its size, $\rM_n(k,2)$ is an uniformly random quadrangulation of the hexagon. 
Furthermore, by reasoning similar to that in the proof of Corollary~\ref{cor:mnweak}, it is straightforward to see that for all $\kappa \in \cC_B(\rT_n)$, if $\kappa$ and $\sigma_{\rT_n}(\kappa)$ are both elements of $\cC_B(\rT_n(k,1))$ then $\sigma_{\rT_n}(\kappa)=\sigma_{\rT_n(k,1)}(\kappa)$. Likewise, if $\kappa,\sigma_{\rT_n}(\kappa) \in \cC_B(\rT_n(k,2))$ then $\sigma_{\rT_n}(\kappa)=\sigma_{\rT_n(k,2)}(\kappa)$. 

Now let $A(k,n)$ be the event  that no node of $V_{n}^{<R}$ is a weak descendant of $v_k$, that $H_n \subset v(\rT_n(k,2))$, and that $|v(\rT_n(k,2))| \ge n/2$. 
Almost surely, $V_{n}^{<R}=V_{\infty}^{<R}$ for all $n$ sufficiently large, and $s^*(\rT_n)$ a.s.\ decreases to $-\infty$. For any $\eps > 0$, we may therefore choose $k$ and $n$ large enough that $\p{A(k,n)} > 1-\eps$, and for such $k$ and $n$ we have 
\[
|\p{\rM_n^{<R}\in \mathcal{S},E(\rM_n)} - \p{\rM_n^{<R}\in \mathcal{S},E(\rM_n),A(k,n)}| < \eps\, . 
\]
Furthermore, by the above observations about consistency of closure locations in $\rT_n$ and $\rT_n(k,2)$, if  $A(k,n)$ occurs then $\rM_n^{<R} = \rM_n^{<R}(k,1)$ and $E(\rM_n)=E(\rM_n(k,2))$, where we let $E(\rM_n(k,2)) = \{\rM_n(k,2) \in \cM(U)\}$. 
We thus have 
\begin{align*}
& \p{\rM_n^{<R}\in \mathcal{S},E(\rM_n),A(k,n)} \\
= & \p{\rM_n^{<R}(k,1)\in \mathcal{S},E(\rM_n(k,2)),A(k,n)} \\
= & \p{\rM_n^{<R}(k,1)\in \mathcal{S},A(k,n)}\p{E(\rM_n(k,2))~|~A(k,n),\rM_n^{<R}(k,1)\in \mathcal{S}} \, \, 
\end{align*}
so 
\begin{align*}
&  |\p{\rM_n^{<R}\in \mathcal{S},E(\rM_n)}  & \\
 &- \p{\rM_n^{<R}(k,1)\in \mathcal{S},A(k,n)}\cdot \p{E(\rM_n(k,2))|A(k,n),\rM_n^{<R}(k,1)\in \mathcal{S}}|  < \eps. 
\end{align*}
For $k$ and $n$ large enough that $\p{A(k,n)} > 1-\eps$, we also have  
\begin{align*}
|\p{\rM_n^{<R}\in \mathcal{S}} -\p{\rM_n^{<R}(k,1)\in \mathcal{S},A(k,n)}| & < \eps. 
\end{align*}

Now recall that $\rT_n(k,2)$ is an uniform binary tree and is independent of $\rT_n(k,1)$ conditional on its size. It follows that given that $|v(\rT_n(k,2))|=m$, $\rM_n(k,2)$ is distributed as $\rM_m$, so 
\[
\p{E(\rM_n(k,2))~|~A(k,n),\rM_n^{<R}(k,1)\in \mathcal{S},|v(\rT_n(k,2))|=m} = \p{E(\rM_m)}\, .
\]
Given $A(k,n)$ we have $n/2 \le |v(\rT_n(k,2))| \le n$, so by the triangle inequality 
\[
|\p{E(\rM_n(k,2))~|~A(k,n),\rM_n^{<R}(k,1)\in \mathcal{S}} - \p{E(\rM_n)}| 
\le \sup_{n/2 \le m \le n} 2|\p{E(\rM_m)}-2^8/3^6|\, ,
\]
which is also less than $\eps$ for $n$ sufficiently large since $\p{E(\rM_n)} = \p{\rG_n\mbox{ is $3$-connected}} \to 2^8/3^6$ as $n \to \infty$. 
The preceding inequalities (and the fact that probabilities lie between zero and one) then yield that for $n$ large, 
\[
\left|\p{M_n^{<R} \in \cS,E(\rM_n)}
-\p{M_n^{<R} \in \cS} \p{E(\rM_n)}\right| < 2\eps\, .
\]
Since $\eps$ was arbitrary, this establishes (\ref{eq:toprove}).
\end{proof}
\section{\large {\bf Questions and remarks}}\label{sec:conc}
\begin{enumerate}
\setcounter{enumi}{-1}
\item It is not hard to show using enumerative results for irreducible quadrangulations with boundary \cite{mullin68quad} that the degree of the root node has exponential tails in both $M_n$ and $G_n$.\footnote{For the assiduous reader: what we call irreducible was called {\em simple} in \cite{mullin68quad}.} It then follows, from the general result of Gurel-Gurevich and Nachmias \cite{gg13recurrence} on recurrence of planar graph limits, that simple random walk is recurrent on both graphs. 
\item A pioneering work of Brooks, Smith, Stone and Tutte \cite{brooks40dissection} showed how to associate to a squaring of a rectangle with any rooted $3$-connected map. For random maps, this yields a random squaring. We believe it is possible to show that when appropriately rescaled to remain compact, the squarings corresponding to the sequence $G_n$, viewed as random subsets of $\R^2$ converge almost surely for the Hausdorff distance. We are currently pursuing this line of enquiry \cite{ab14square}. 
\item Rather than rooting at $e^{\to}$, it would also be natural to re-root $Q_n$ at (an orientation of) the edge $e^*$ that is added to the hexagon. However, it is straightforward to see that the sequence $(Q_n,e^*)$ can not converge almost surely. This is because the hexagon essentially corresponds to the corner of minimum label in $\rT_n$, and this minimum label tends to $-\infty$ as $n \to \infty$. Nonetheless, we expect that $(Q_n,e^*)$ does converge in distribution, that the law of the limit can be explicitly described using the results from \cite{fusy08dissection}, and that this law is mutually absolutely continuous with that of $\rQ_\infty$. 
\item Marckert \cite{marckert04} essentially establishes that the contour process of $\rT_n$ and the label process $S_{\rT_n}$ (in contour order) converge jointly, after appropriate normalization, to a pair $({\bf e},Z)$, where ${\bf e}$ is a standard Brownian excursion, and $Z$ is the Brownian snake indexed by {\bf e}. (See \cite{legall12scaling} for more details on these objects and their connections with random maps.) Given this, it does not seem out of reach of current technology to prove that $\rQ_n$ converges, after renormalization, to the Brownian map.
\item There is a standard bijection between binary trees and plane trees, that consists of contracting edges from parents to right children in the binary tree; applying this to $\rT_n$ yields a plane tree $\rP_n$. Augmenting the edges of $\rP_n$ with independent uniform $\{-1,0,+1\}$ random variables and applying the Schaeffer bijection then yields an uniformly random rooted quadrangulation (not necessarily irreducible) with $n+1$ nodes. This yields a growth procedure for uniformly random quadrangulations, which I believe deserves investigation. 
\item Evans, Gr\"ubel and Wakolbinger \cite{evans12trickledown} have initiated the study of the Doob-Martin boundary of Luczak and Winkler's tree growth process. Roughly speaking, the Doob-Martin boundary corresponds to the ways in which it is possible to condition on $\rT_{\infty}$ to obtain a well-defined conditional growth procedure for $(\rT_n,n \ge 1)$. It would be interesting to revisit the growth of $\rM_n$ and $\rG_n$ in this context. 
\end{enumerate}

\section{\large {\bf Acknowledgements}}
My sincere thanks to Nicholas Leavitt, for stimulating conversations and for his helpful comments on a draft of this work. Thanks also to Marie Albenque and Gilles Schaeffer, who were the first to teach me about the bijection between $\rT_n$ and $\rM_n$, in 2011.

\vspace{-0.5cm}             
\appendix


\begin{thebibliography}{12}
\providecommand{\natexlab}[1]{#1}
\providecommand{\url}[1]{\texttt{#1}}
\expandafter\ifx\csname urlstyle\endcsname\relax
  \providecommand{\doi}[1]{doi: #1}\else
  \providecommand{\doi}{doi: \begingroup \urlstyle{rm}\Url}\fi

\bibitem[Addario-Berry and Leavitt(2014+)]{ab14square}
Louigi Addario-Berry and Nicholas Leavitt.
\newblock A random infinite squaring of a rectangle.
\newblock In preparation, 2014+.

\bibitem[Aldous and Steele(2004)]{aldous04objective}
David Aldous and J.~Michael Steele.
\newblock The objective method: probabilistic combinatorial optimization and
  local weak convergence.
\newblock In \emph{Probability on discrete structures}, volume 110 of
  \emph{Encyclopaedia Math. Sci.}, pages 1--72. Springer, 2004.
\newblock URL \url{http://www.stat.berkeley.edu/~aldous/Papers/me101.pdf}.

\bibitem[Benjamini and Schramm(2001)]{benjamini01recurrence}
Itai Benjamini and Oded Schramm.
\newblock Recurrence of distributional limits of finite planar graphs.
\newblock \emph{Electron. J. Probab.}, 6:\penalty0 no. 23, 13 pp. (electronic),
  2001.
\newblock URL \url{http://arxiv.org/abs/math/0011019}.

\bibitem[Brooks et~al.(1940)Brooks, Smith, Stone, and
  Tutte]{brooks40dissection}
R.~L. Brooks, C.~A.~B. Smith, A.~H. Stone, and W.~T. Tutte.
\newblock The dissection of rectangles into squares.
\newblock \emph{Duke Math. J.}, 7:\penalty0 312--340, 1940.

\bibitem[Evans et~al.(2012)Evans, Gr{{\"u}}bel, and
  Wakolbinger]{evans12trickledown}
Steven~N. Evans, Rudolf Gr{{\"u}}bel, and Anton Wakolbinger.
\newblock Trickle-down processes and their boundaries.
\newblock \emph{Electron. J. Probab.}, 17:\penalty0 no. 1, 58, 2012.
\newblock URL \url{http://arxiv.org/abs/1010.0453}.

\bibitem[Fusy et~al.(2008)Fusy, Poulalhon, and Schaeffer]{fusy08dissection}
\'{E}ric Fusy, Dominique Poulalhon, and Gilles Schaeffer.
\newblock Dissections, orientations, and trees with applications to optimal
  mesh encoding and random sampling.
\newblock \emph{ACM Trans. Algorithms}, 4\penalty0 (2):\penalty0 19:1--19:48,
  2008.
\newblock URL \url{http://arxiv.org/abs/0810.2608}.

\bibitem[Gurel-Gurevich and Nachmias(2013)]{gg13recurrence}
Ori Gurel-Gurevich and Asaf Nachmias.
\newblock Recurrence of planar graph limits.
\newblock \emph{Ann. of Math. (2)}, 177\penalty0 (2):\penalty0 761--781, 2013.
\newblock URL \url{http://arxiv.org/abs/1206.0707}.

\bibitem[Lando and Zvonkin(2004)]{lando04graphs}
Sergei~K. Lando and Alexander~K. Zvonkin.
\newblock \emph{Graphs on surfaces and their applications}, volume 141 of
  \emph{Encyclopaedia of Mathematical Sciences}.
\newblock Springer-Verlag, Berlin, 2004.
\newblock ISBN 3-540-00203-0.

\bibitem[Le~Gall and Miermont(2012)]{legall12scaling}
Jean-Fran{\c{c}}ois Le~Gall and Gr{{\'e}}gory Miermont.
\newblock Scaling limits of random trees and planar maps.
\newblock In \emph{Probability and statistical physics in two and more
  dimensions}, volume~15 of \emph{Clay Math. Proc.}, pages 155--211. Amer.
  Math. Soc., 2012.
\newblock URL \url{http://www.math.u-psud.fr/~jflegall/Cours-Buziosf.pdf}.

\bibitem[Luczak and Winkler(2004)]{luczak04building}
Malwina Luczak and Peter Winkler.
\newblock Building uniformly random subtrees.
\newblock \emph{Random Structures Algorithms}, 24\penalty0 (4):\penalty0
  420--443, 2004.
\newblock URL \url{http://www.math.dartmouth.edu/~pw/papers/birds.ps}.

\bibitem[Marckert(2004)]{marckert04}
Jean-Fran{\c{c}}ois Marckert.
\newblock The rotation correspondence is asymptotically a dilatation.
\newblock \emph{Random Structures Algorithms}, 24\penalty0 (2):\penalty0
  118--132, 2004.
\newblock URL \url{http://www.labri.fr/perso/marckert/propre.pdf}.

\bibitem[Mullin and Schellenberg(1968)]{mullin68quad}
R.~C. Mullin and P.~J. Schellenberg.
\newblock The enumeration of {$c$}-nets via quadrangulations.
\newblock \emph{J. Combinatorial Theory}, 4:\penalty0 259--276, 1968.
\newblock URL
  \url{http://www.sciencedirect.com/science/article/pii/S0021980068800079#}.

\end{thebibliography}
\end{document}